\title{A non-computable c.e.~closed subset of $[0,1]$}
\author[S. Badaev]{Serikzhan Badaev}
\address{Kazakh-British Technical University, Almaty, Kazakhstan}
\email{s.badaev@kbtu.kz}
\author[N. Bazhenov]{Nikolay Bazhenov}
\address{Sobolev Institute of Mathematics, Novosibirsk, Russia}
\address{Kazakh-British Technical University, Almaty, Kazakhstan}
\address{Nazarbayev University, Astana, Kazakhstan}
\email{nickbazh@yandex.ru}
\author[S. Goncharov]{Sergey Goncharov}
\address{Sobolev Institute of Mathematics, Novosibirsk, Russia}
\email{s.s.goncharov@math.nsc.ru}
\author[B. Kalmurzayev]{Birzhan Kalmurzayev}
\address{Kazakh-British Technical University, Almaty, Kazakhstan}
\email{birzhan.kalmurzayev@gmail.com}
\author[A. Melnikov]{Alexander Melnikov}
\address{Victoria University of Wellington, New Zealand}
\email{alexander.g.melnikov@gmail.com}
\theoremstyle{plain}
\newtheorem{theorem}{Theorem}[section]
\newtheorem{proposition}[theorem]{Proposition}
\newtheorem{lemma}[theorem]{Lemma}
\newtheorem{corollary}[theorem]{Corollary}
\theoremstyle{definition}
\newtheorem{definition}[theorem]{Definition}
\newtheorem{property}[theorem]{Property}
\newtheorem{notation}[theorem]{Notation}
\theoremstyle{remark}
\newtheorem{remark}[theorem]{Remark}
\begin{document}

\begin{abstract} We prove that there exists a $\Sigma^0_1$ closed subset of $[0,1]$
that is not homeomorphic to any computably compact space.
We show that the index set of c.e.~subspaces of $[0,1]$ that admit 
a computably compact presentation is not arithmetical, as witnessed by subsets of $[0,1]$.
The index set result is new for computable Polish spaces in general, not only for those realised as c.e.~closed subsets of $[0,1]$.
\end{abstract}

\maketitle

\tableofcontents


\section{Introduction}
The central focus of this paper is the class of compact Polish spaces. The most commonly used notions of algorithmic presentability for such spaces are:
\begin{enumerate}
\item computable Polish presentations;
\item computably compact presentations;
\item $\Pi^0_1$ and $\Sigma^0_1$~closed presentations.
\end{enumerate}

We postpone formal definitions of these standard notions to the preliminaries. 
Informally: (1)~requires a countable dense subset with a computable metric; (2)~strengthens this by demanding an effective, uniform refinement of any cover to a finite sub-cover; and (3)~fixes an ambient space \( M \) (e.g., \( \mathbb{R}^n \), \( (C[0,1]; \mathbb{R}) \), or the Hilbert Cube) and requires either an effectively enumerable complement (\( \Pi^0_1 \)-closed) or an effectively enumerable  sequence (within \( M \)) that is dense in the closed set (\( \Sigma^0_1 \)-closed, respectively).

We view our spaces, and their presentations, up to homeomorphism. For instance, a space \( C \) has a computably compact presentation if it is \emph{homeomorphic} to a computably compact space. It is known that only trivial implications hold between (1), (2), and (3) up to homeomorphism. For example, being computable Polish is equivalent to being c.e.~closed (this follows from, e.g., the main result of~\cite{UnivC01}).  For further details, we refer to the recent surveys~\cite{IlKi,effcomp}. 
However, we note that the results distinguishing (1), (2), and (3) are very recent  and have been established in \cite{Bosshomo,uptohom,topsel,lupini,bastone}. This contrasts with the situation in effective algebra, where standard notions of effective presentability, such as c.e., computable, \( n \)-decidable, and decidable structures, were separated nearly half a century ago (e.g., \cite{Khi,feiner1970,Hig}). 

\subsection{Computably compact presentations}
The notion of a computably compact Polish space is particularly robust, admitting many equivalent formulations. For instance, a compact computable Polish space is computably compact if and only if its continuous diagram is decidable~\cite{effcomp}. This robustness suggests that computably compact presentations are especially desirable and, as argued in~\cite{effcomp}, could serve as a foundation for the theory of effectively presented compact Polish spaces.
In contrast, the notions of computable Polish and $\Pi^0_1$ and $\Sigma^0_1$~closed presentations are weaker up to homeomorphism, in general (this follows from, e.g.,~\cite{uptohom, bastone}).
One of the central questions of such studies is:

\begin{center}Q1: \emph{Which $\Pi^0_1$, $\Sigma^0_1$ compacta are homeomorphic to  computably compact ones?}
\end{center}

Remarkably little is known even for relatively tame classes of spaces. We briefly summarise the current state of knowledge.
The first examples of (compact) \( \Pi^0_1 \), \( \Sigma^0_1 \) closed sets that are not homeomorphic to computably compact spaces appear in~\cite{uptohom, topsel}. Regarding positive results, it is known that every computable (equivalently, c.e.~closed) Stone space is homeomorphic to a computably compact one~\cite{uptohom, effcomp}. In contrast, there exists a Stone space that is \( \Pi^0_1 \)-closed in \([0,1]\) but not homeomorphic to any computably compact space~\cite{bastone}.
A complete characterisation of computably compact solenoid spaces has been established~\cite{lupini, pontr}, as well as, more generally, the connected Pontryagin duals of discrete countable abelian groups. Similarly, a broad class of solenoid spaces admitting c.e.~closed presentations, up to homeomorphism, has been characterised~\cite{newpolish}. 
Additional results include descriptions of specific ad hoc classes of spaces, such as star-spaces (wedge sums of finitely many intervals) and disjoint sums of \( n \)-spheres, found in~\cite{topsel, uptohom, CounterExa}.
Further interesting closed subsets of $\mathbb{R}^n$ were constructed in \cite{Bosshomo}, however, the results in \cite{Bosshomo} work only up to self-homeomorphism of the ambient space. We also cite the somewhat related works~\cite{MillerBalls, mani1, mani2, mani3, mani4, AmirHoyrup2023}, which, however, typically consider a \emph{fixed} \( \Pi^0_1 \)-closed subset of \( \mathbb{R}^n \) that is homeomorphic to, for example, an \( m \)-sphere or an arc. The question then posed is whether this fixed closed set is also $\Sigma^0_1$ (and therefore computably compact). This framework is almost exclusively focused on positive results.

In summary, nearly all interesting counterexamples to date are derived either from effective Pontryagin or Stone duality or are artificially constructed using wedge sums and disjoint sums of intervals and \( n \)-spheres. This briefly summarises our current knowledge and technical tools in the field.

Given that many  counterexamples are constructed using \( n \)-stars, \( n \)-spheres (\( n \geq 2 \)), and solenoid spaces, one might wonder whether  interesting complex examples can be found within the unit interval. (None of these spaces can be embedded into \([0,1]\).) 
We have seen that there exist \( \Pi^0_1 \)-closed, totally disconnected (Stone) subsets of \([0,1]\) that are not homeomorphic to any computably compact space. On the other hand, it is known that c.e.~Stone spaces admit computably compact presentations. 
The following question, therefore, was left open in~\cite{CounterExa}:

\begin{center}
Q2: \emph{Is there a c.e.~closed $C \subseteq [0,1]$ with no computably compact homeomorphic copy?}
\end{center}

We remark that partial results were proven in~\cite{Bosshomo}; however, as noted earlier, these results work only up to self-homeomorphism of \([0,1]\), and not up to homeomorphism in general\footnote{While the article was in production, Bosserhoff [A counterexample regarding c.e.\ closed subsets of $[0,1]$ under homeomorphisms, arXiv preprint arXiv:2508.00166, 2025] announced an independent solution to Q2.}.

\subsection{Results}
We shall attack both Q1 and Q2 simultaneously by proposing a new method for constructing c.e.~closed subsets of \([0,1]\) that resemble Stone spaces. This approach relies heavily on the theory of computable Boolean algebras. For example, we will utilise Thurber's well-known result~\cite{ThurThesis}, which states that every \( \mathrm{low}_2 \) Boolean algebra is isomorphic to a computable one.

The results are as follows.

\begin{theorem}\label{thm:1} There exists a c.e.~closed subset $C$ of the standard presentation of the unit interval $[0,1]$ 
so that $C$ is \emph{not} homeomorphic to any computably compact space.
   \end{theorem}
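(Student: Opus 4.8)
The plan is to build $C$ as a c.e.\ closed subset of $[0,1]$ that, topologically, looks like a countable Stone space (a countable compact metric space, hence homeomorphic to a countable successor ordinal $\omega^\alpha\cdot n+1$ with its order topology) but whose Cantor--Bendixson structure is arranged so that any computably compact homeomorphic copy would yield a computable presentation of a Boolean algebra that we know cannot be computable. Concretely, I would fix a Boolean algebra $B$ that is $\mathrm{low}_2$ (hence computable by Thurber's theorem, which we are permitted to cite) but whose associated Stone space we can realise as a c.e.\ closed subset of $[0,1]$, while simultaneously ensuring the obstruction to computable compactness. The key asymmetry to exploit is that the $\Sigma^0_1$ (c.e.) presentation only requires us to enumerate a dense sequence of points \emph{from the outside}, whereas computable compactness would force us to \emph{locate} the space's limit-point structure effectively.

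The construction itself proceeds by encoding a countable Stone space into $[0,1]$ as a nested system of rational subintervals: I would reserve a computable sequence of disjoint closed rational intervals $I_\sigma$ indexed by strings $\sigma$ in a tree, with the $I_\sigma$ shrinking and accumulating according to the Cantor--Bendixson rank we wish to impose, and let $C$ be the closure of the enumerated sequence of the designated ``leaf/limit'' points. The c.e.\ (i.e.\ $\Sigma^0_1$ closed) property comes for free: we simply enumerate the dense sequence of chosen rationals as the approximation reveals which intervals survive. The nontrivial design decision is to make the branching pattern of the tree mimic the atom structure of a Boolean algebra whose isolated/limit point count at each Cantor--Bendixson level is controlled by a $\Sigma^0_2$ or $\Pi^0_2$ condition, so that deciding this structure is exactly as hard as computing $B$.

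The heart of the argument is the negative direction: I would assume toward a contradiction that $C$ has a computably compact homeomorphic copy $K$, and then show that the computable compactness of $K$ lets us \emph{effectively recover} the Cantor--Bendixson derivatives of $K$ (equivalently, effectively separate isolated points from limit points at each level), because computable compactness gives a uniform, effective finite-subcover procedure that locates clusters. For a countable compact metric space this effective access to the derivative sequence yields a computable (or at least $\mathrm{low}_2$-uniform) presentation of the Stone dual Boolean algebra $B$. I would then arrange, in the construction, that $B$ is chosen so that no such computable presentation can exist under the additional constraints our encoding imposes—using a diagonalisation/indexing argument against all potential computably compact copies, in the style of the separations in~\cite{uptohom, topsel, bastone}. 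The main obstacle, and where the real work lies, is establishing this recovery lemma rigorously: I must show that computable compactness of \emph{any} homeomorphic copy—not just a copy respecting the ambient $[0,1]$ structure—suffices to compute the derivative structure, since homeomorphism invariance is precisely what distinguishes Q2 from the easier ``up to self-homeomorphism of $[0,1]$'' results of~\cite{Bosshomo}. Controlling the topology tightly enough that computable compactness forces computability of $B$, while keeping $C$ merely c.e.\ closed, is the crux of the whole theorem.
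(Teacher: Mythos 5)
Your proposal has the right high-level shape (encode a bad Boolean algebra into a c.e.\ closed subset of $[0,1]$ via a tree of nested rational intervals, then show that a computably compact copy would let one effectively recover enough of the Cantor--Bendixson/atom structure to compute the algebra), and you correctly identify the recovery lemma as the crux. But there is a fatal flaw in the first step: you propose to make $C$ a \emph{countable} compact metric space, i.e.\ a scattered, totally disconnected space, hence a Stone space. It is known (and is quoted in the introduction of this very paper) that every c.e.\ closed Stone space \emph{is} homeomorphic to a computably compact space; this is precisely why Q2 was open and why no purely totally disconnected construction can work. The paper's central new idea, which your proposal is missing, is to destroy total disconnectedness in a controlled way: each atom of the Boolean algebra is realised not as an isolated point but as a clopen copy of $[0,1]$ (an ``intom''), obtained by densifying the interval $I_\sigma$ whenever the node $\sigma$ keeps splitting off and discarding candidate children; the atomless parts become Cantor sets. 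One then proves that the derivative $C'$ modulo collapsing intoms recovers $\widehat{B}$, and that over a computably compact copy $D$ the relevant predicates (CB-equivalence of clopen sets, being an intom, atomlessness, $inf$) are all arithmetical of level at most two, yielding a $\mathbf{0}''$-presentation of $(B,\mathrm{atom},\mathrm{atomless},inf)$.

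Your use of Thurber's theorem is also inverted. You say you will ``fix a Boolean algebra $B$ that is $\mathrm{low}_2$ (hence computable by Thurber's theorem)''; but a computable $B$ is useless here. The paper fixes Feiner's c.e.-presented $B$ with \emph{no} computable copy, and Thurber's theorem (in the Knight--Stob form) is applied at the end of the contradiction argument: the $\mathbf{0}''$-presentation of $(B,\mathrm{atom},\mathrm{atomless},inf)$ extracted from the hypothetical computably compact $D$ would force $B$ to have a computable presentation, contradicting Feiner. Without (i) the intom mechanism making the space non-Stone, (ii) a proof that the reduct operation is a homeomorphism invariant recovering $\widehat{B}$ (so the argument survives passage to an arbitrary homeomorphic copy, which is the point you rightly flag as distinguishing this from the self-homeomorphism results), and (iii) the correct direction of the Thurber/Feiner contradiction, the proposal does not go through.
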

   
   We state here one almost immediate corollary which, in a way, improves \cite[Theorem 1.3]{CounterExa}:

\begin{corollary}\label{cor:Ba}
There exists a closed subset $C \subseteq [0,1]$ so that the Banach space $(\mathcal{C}[C; \mathbb{R}], d_{sup}, +)$ has a computable presentation, but $C$ has no computably compact homeomorphic copy.
\end{corollary}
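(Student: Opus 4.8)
The plan is to reuse the very space $C$ produced by Theorem~\ref{thm:1}: by that theorem $C$ has no computably compact homeomorphic copy, so the only thing left to do is to equip $(\mathcal{C}[C;\mathbb{R}], d_{sup}, +)$ with a computable presentation. Since computable presentations of Banach spaces are taken up to isometric isomorphism, the Banach--Stone theorem lets me work with any homeomorphic copy of $C$ rather than with its concrete embedding into $[0,1]$. The point to exploit is that $C$ is a Stone space: the construction underlying Theorem~\ref{thm:1} realises $C$ (up to homeomorphism) as $\mathrm{Stone}(B)$ for a computable Boolean algebra $B$ obtained, via Thurber's theorem, from a $\mathrm{low}_2$ one, so that the clopen algebra $\mathrm{Clopen}(C) \cong B$ is computable. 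It therefore suffices to present $\mathcal{C}(\mathrm{Stone}(B);\mathbb{R})$ directly from the computable structure of $B$.

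For the special (dense) elements I would take the rational clopen step functions $\sum_{i\le k} q_i \chi_{A_i}$, where $1 = a_1 \vee \dots \vee a_k$ is a partition of unity of $B$ into pairwise disjoint nonzero pieces $a_i$, the set $A_i$ is the clopen subset corresponding to $a_i$, and $q_i \in \mathbb{Q}$. These can be listed computably: one enumerates finite tuples from the computable domain of $B$, and checks pairwise disjointness and that the join equals $1$, all of which are decidable in $B$. Density in $\mathcal{C}(C;\mathbb{R})$ is the Stone--Weierstrass theorem: because $C$ is compact and totally disconnected, the clopen indicators span a point-separating, unital subalgebra closed under the lattice operations, so locally constant functions are dense, and rationalising the coefficients preserves density.

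It remains to check that the linear structure and the norm are computable on the special elements. Addition, rational scalar multiplication, and comparison of two step functions are computed by passing to a common refinement of the underlying partitions, that is, by finitely many meet and join operations in $B$, which are computable. The crucial computation is the norm: for a step function whose pieces are nonempty one has $\bigl\| \sum_{i\le k} q_i \chi_{A_i} \bigr\|_{sup} = \max_{i\le k} |q_i|$, simply because the function is identically $q_i$ on the nonempty set $A_i$. This value is a computable real, \emph{uniformly}, precisely because nonemptiness of $A_i$ is the decidable relation $a_i \neq 0$ in the computable Boolean algebra $B$; empty pieces are detected and discarded before taking the maximum. Thus every special element has a uniformly computable norm, and $d_{sup}(f,g)=\|f-g\|_{sup}$ is computable, giving a genuine computable Banach space presentation. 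Note that this argument never requires an effective enumeration of finite covers of $C$, which is exactly why it survives the absence of a computably compact copy.

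The one step that must be handled with care, and which I regard as the main obstacle, is the claim opening the argument: that the space delivered by Theorem~\ref{thm:1} really is the Stone space of a \emph{computable} Boolean algebra, equivalently that $\mathrm{Clopen}(C)$ is computable. I would establish this by reading it off the construction of $C$ (where $B$ is built explicitly) rather than reproving it, so that the corollary becomes genuinely almost immediate. Granting this, combining the computable presentation of $\mathcal{C}[C;\mathbb{R}]$ just described with the non-existence of a computably compact homeomorphic copy from Theorem~\ref{thm:1} yields the statement; the fact that $C$ now lives inside $[0,1]$ is exactly the improvement over \cite[Theorem~1.3]{CounterExa}.
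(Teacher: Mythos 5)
Your argument has a fatal gap at exactly the step you flag as the main obstacle: the space $C$ delivered by Theorem~\ref{thm:1} is \emph{not} a Stone space, and $\mathrm{Clopen}(C)$ is not (isomorphic to) a computable Boolean algebra. By construction $C$ contains \emph{intoms} --- clopen subsets homeomorphic to $[0,1]$ --- arising from the terminal nodes of the tree, as well as Cantor-set components; it is therefore not totally disconnected and cannot be the Stone space of any Boolean algebra. Worse, your premise is self-defeating: the paper points out that every c.e.~closed Stone space \emph{is} homeomorphic to a computably compact space, so if $C$ were a c.e.~presented Stone space the conclusion of Theorem~\ref{thm:1} would fail. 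You have also inverted the role of Thurber's theorem: the Boolean algebra $B$ in the construction is Feiner's c.e.~presented algebra with \emph{no} computable presentation, and Thurber's result is invoked only inside the contradiction argument (a hypothetical computably compact copy of $C$ would yield a $\mathbf{0}''$-presentation of $B$ with enough decidable predicates to force a computable copy, contradicting Feiner). There is no computable $B$ anywhere to build on. As a downstream consequence, your proposed dense set of rational clopen step functions is not dense in $\mathcal{C}[C;\mathbb{R}]$: locally constant functions cannot approximate a non-constant continuous function on an intom, so Stone--Weierstrass does not apply in the form you use it.

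The paper's actual proof takes a different and genuinely metric route that does not pretend $C$ is zero-dimensional. The dense subset of $\mathcal{C}[C;\mathbb{R}]$ consists of piecewise linear functions with finitely many rational breakpoints placed over the \emph{special points} of $C$. The key observation is that the supremum of such a function over $C$ equals its supremum over $[0,1]$ and is achieved at a breakpoint that is guaranteed to lie in $C$, so the sup norm (and hence $d_{sup}$) is uniformly computable exactly; this is the subtlety that Remark~\ref{rem:rem} explains can fail for general compact computable Polish spaces, where one only gets a left-c.e.~metric. Density then follows from Stone--Weierstrass because these functions separate points of $C$. If you want to salvage your write-up, you must replace the clopen-step-function presentation by something that can approximate continuous functions on the connected pieces of $C$, and you must justify exact computability of the norm without assuming decidability of nonemptiness of clopen pieces of a computable Boolean algebra that does not exist here.
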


In \cite{CounterExa}, a c.e.~closed subset of $[0,1]^2$ with this property is constructed. As explained in \cite{CounterExa}, the significance of this result is that the effective version of Banach--Stone Duality between 
 $\mathcal{C}[C; \mathbb{R}]$ and compact $C$ \emph{fails}, at least in the strongest desirable form. This answered a question of McNicholl.
 In contrast, the Banach--Stone duality is effective for Stone spaces~\cite{bastone}, and the closely related Gelfand duality (for $C^*$-algebras of the form $\mathcal{C}[C; \mathbb{R}]$) is also effective in general~\cite{burton2024computablegelfandduality}.
 The reader may find it somewhat surprising that a counterexample to the effective Banach--Stone duality can be found already among closed subsets of $[0,1]^2$, let alone $[0,1]$.

\
   
   We now discuss the second result.
   Fix a uniform enumeration $(M_i)_{i \in \omega}$ of all (partial) computable Polish spaces. (We remark that, for $M_i$, the property of being compact is $\Pi^0_3$~\cite{CompComp}.)

\begin{theorem}\label{thm:2}  The index set 
$$\{i: M_i \mbox{ is homeomorphic to a computably compact space}\}$$
is not arithmetical, i.e., not $\Sigma^0_n$ for any $n \in \omega$.
   \end{theorem}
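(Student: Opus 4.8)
The plan is to leverage Theorem~\ref{thm:1} together with a uniform, effective version of its construction in order to reduce a properly $\Sigma^0_n$-complete (or better, a non-arithmetical) set to the index set in question. The guiding idea is that the construction witnessing Theorem~\ref{thm:1} is driven by some combinatorial or Boolean-algebraic parameter---by the paper's own admission, it rests on the theory of computable Boolean algebras and on Thurber's theorem that every $\mathrm{low}_2$ Boolean algebra has a computable copy. I would first isolate the precise parameter $P$ that controls whether the constructed $C \subseteq [0,1]$ admits a computably compact copy: the construction should produce, uniformly in an index $e$ for $P$, a c.e.~closed set $C_e \subseteq [0,1]$ (equivalently, by the remarks in the introduction, a computable Polish presentation $M_{f(e)}$ via a total computable function $f$) such that $C_e$ is homeomorphic to a computably compact space if and only if $e \in A$, for some fixed non-arithmetical set $A$. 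Because every c.e.~closed subset of $[0,1]$ is in particular a computable Polish space, $f$ witnesses a many-one reduction $A \leq_m \{i : M_i \text{ is homeomorphic to a computably compact space}\}$, and non-arithmeticity of the latter follows immediately.

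The key steps, in order, are as follows. First, I would make the construction of Theorem~\ref{thm:1} \emph{uniform} in an oracle or in a Boolean-algebra parameter, verifying that the map producing $C_e$ (and hence $M_{f(e)}$) is totally computable and produces a genuine computable Polish presentation for every index $e$, not merely for the single index used in Theorem~\ref{thm:1}. Second, I would identify what distinguishes the ``good'' case (computably compact copy exists) from the ``bad'' case at the level of the parameter. The natural candidate, given the Boolean-algebra machinery, is a condition expressible as a quantifier over all computable (or low$_2$) presentations, which is inherently non-arithmetical: for instance, a condition of the form ``the Boolean algebra coded by $e$ has a low$_n$, or hyperarithmetical, or arithmetical-in-no-fixed-level presentation.'' Third, I would fix a concrete non-arithmetical source set $A$---one natural choice is an index set of Boolean algebras (or of structures) whose complexity is known to exceed every $\Sigma^0_n$, together with a uniform coding of such algebras into the construction---and verify the biconditional $e \in A \iff M_{f(e)}$ is homeomorphic to a computably compact space.

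The main obstacle I anticipate is the \textbf{verification of the ``bad'' direction uniformly}: showing that whenever $e \notin A$, the resulting space $C_e$ genuinely has \emph{no} computably compact homeomorphic copy, and that this failure is controlled entirely by the parameter rather than by incidental features of a particular run of the construction. In Theorem~\ref{thm:1} this is established for one carefully chosen instance; upgrading it to a uniform family requires that the obstruction to computable compactness (presumably a failure of some effective refinement or subcover property, traced back through the homeomorphism to a property of the coded Boolean algebra) be robust across all homeomorphic copies and all choices of presentation. This is where the strength of computable-Boolean-algebra invariants matters: I would want the obstruction to be a \emph{topological-homeomorphism invariant} that is provably non-arithmetical, so that the reduction cannot be collapsed to a lower level. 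A secondary technical point is confirming that the enumeration $(M_i)_{i \in \omega}$ of partial computable Polish spaces interfaces correctly with the c.e.~closed presentations---i.e., that $f$ lands in the indices of \emph{total} computable Polish spaces---so that membership in the index set is being tested against bona fide compact spaces rather than degenerate partial objects.
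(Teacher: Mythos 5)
Your overall strategy---uniformise the construction of Theorem~\ref{thm:1} in a Boolean-algebra parameter and push a hard set through it---matches the paper's plan in outline, and your concern about the enumeration of partial spaces is legitimate but minor. However, you have misidentified where the difficulty lies, and the step you gloss over is the one that carries essentially all of the new content. The ``bad'' direction that you flag as the main obstacle is in fact nearly free: the argument of Theorem~\ref{thm:1} already shows, uniformly in any $B$ of the Feiner form, that a computably compact copy of $C(B)$ yields a $\mathbf{0}''$-presentation of $(B,\mathrm{atom},\mathrm{atomless},\mathit{inf})$ and hence, by Thurber's theorem, a computable copy of $B$; so if $B$ has no computable presentation, $C(B)$ has no computably compact copy, with no extra uniformity work needed. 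The genuine gap in your proposal is the converse: you never explain why, in the ``good'' case, the constructed space $C(B_i)$ actually \emph{admits} a computably compact presentation. This is not automatic, because the construction produces $C(B_i)$ from a particular (c.e./$\Delta^0_2$) presentation of $B_i$, and a priori the homeomorphism type of $C(B_i)$ could depend on that presentation rather than only on the isomorphism type of $B_i$; if it did, knowing that $B_i$ has \emph{some} computable copy would tell you nothing about $C(B_i)$. The paper resolves this with Lemma~\ref{lem:du1}, whose proof requires a Remmel--Vaught-type result (Proposition~\ref{prop:Cenzer-Remmel}) for Boolean algebras with a distinguished set of atoms $\mathtt{in}$: one shows that the ``junk'' atoms and the intom-atoms are balanced (condition~($\dagger$)), so that the isomorphism type of $(B,\mathtt{in})$ is determined by the quotient $B/{\sim}$, and hence $C(B)$ up to homeomorphism depends only on the isomorphism type of $B$. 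Without this step the biconditional $e\in A \iff M_{f(e)}$ computably compact fails in one direction.

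A secondary difference: you propose fixing a single non-arithmetical source set $A$ and a condition such as ``the coded algebra has a low$_n$/hyperarithmetical presentation,'' which is not how the reduction works and would be hard to implement. The paper instead takes an arbitrary arithmetical predicate $P$ (say $\Sigma^0_{2m+4}$) and splices $P(i)$ into Feiner's coding predicates $R_n$ at levels $n\geq m$, so that $B_i$ is computably presentable iff $P(i)$ fails; letting $m$ range over all levels shows the index set is not $\Sigma^0_n$ for any $n$. This level-by-level reduction keeps every individual construction arithmetically effective, which a reduction from a single non-arithmetical set would not.
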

   
The stated earlier Theorem~\ref{thm:1} resolves Q2 from~\cite{CounterExa}, while Theorem~\ref{thm:2} uses tools from the proof of Theorem~\ref{thm:1} to provide a lower estimate for the index set complexity in Q1. This lower estimate is the best known to date for spaces in general, not just for closed subsets of \([0,1]\). Our techniques seem to be limited to arithmetical sets, but we conjecture that the index set in Theorem~\ref{thm:2} is $\Sigma^1_1$-complete\footnote{It is $\Sigma^1_1$ as follows, e.g., from \cite[Proposition 8.1.34]{TheBook}
and the fact that every computably compact space can be realised as a computable subset of the Hilbert Cube; see Proposition~\ref{prop:stuff}.}. 

Although our proofs are not particularly difficult, they rely on new ideas and relatively advanced tools. For example, we analyse the proof of Feiner's classical theorem~\cite{feiner1970}, apply the aforementioned theorem of Thurber~\cite{ThurThesis}, and prove an extension of the Remmel--Vaught Theorem~\cite{Vau,RemIso} for a class of algebras extending Boolean algebras. Consequently, much of the intrinsically complex combinatorics is delegated to both new and classical results in effective algebra (and their proofs).
Our proofs also suggest that an effective duality might exist between Boolean algebras with a distinguished subset of atoms and certain closed subsets of \([0,1]\). In this article, we shall prove only  partial effective duality-type results sufficient to derive the main theorems, leaving the verification of the general effective duality for future investigation.

\section{Preliminaries}
\subsection{Computable Polish spaces and effectively closed sets}
\begin{definition}
A  \emph{computable} (presentation of a) \emph{Polish space} is given by:
(1) a~dense sequence $(x_i)_{i \in \omega}$, perhaps with repetitions, and (2)~ 
a~computable function $f$ which, given $i, j, s \in \omega$,
outputs $r = \dfrac{n}{m} \in \mathbb{Q}$ such that
$|d(x_i, x_j) - r| < 2^{-s},$
where $d$ is the metric on the space. 
\end{definition}

We view spaces up to homeomorphism, and thus we require that the metric is compatible with the topology.
We also require that the metric is complete, and so $M =  \overline{(x_i)_{i \in \omega}}$. This is not really a restriction in the compact case, since
every compact metric space is necessarily complete.

Points $x_j$ from this sequence are called \emph{special, ideal,} or (less frequently) \emph{rational}.  
A \emph{basic open ball} is a ball of the form $B(x_{j}, r) = \{y \in M: d(x_j, y) < r \}$.
Here, $x_j$ is a special point and $r \in \mathbb{Q}$ is positive. We also always represent rational numbers 
as fractions when possible. In particular, a basic open ball is assumed to have its radius represented as a fraction.
We say that an open set $V$ is c.e.~(in a computable Polish space $M$) if $V$ is a c.e.~union of basic open balls represented in this way.
Say that a sequence of special points $(y_j)_{j \in \omega}$ is \emph{fast Cauchy} if $d(y_j, y_{j+1}) < 2^{-j}$, for all $j$.
The \emph{name} of a point $x \in M$ of a computable Polish space $M$ is the set $N^x = \{B \ni x : B \mbox{ is basic open}\}$.
A point is computable if it admits a c.e.~ name. (Equivalently, if there exists a computable fast Cauchy sequence converging to the point.)

\begin{definition} Let $M$ be a computable Polish space and assume $C \subseteq M$ is closed. Then:
\begin{enumerate}
\item $C$ is $\Pi^0_1$ or \emph{effectively closed} if $(M - C)$ is c.e.~open.
\item  $C$ is $\Sigma^0_1$ or \emph{c.e.~closed} if there is a uniformly computable sequence $(y_j)_{j \in \omega}$
of $M$-computable points that is dense in $C$, i.e., $C = \overline{(y_j)_{j \in \omega} }$.
\item \emph{computable closed} if it is both $\Pi^0_1$ and $\Sigma^0_1$.
\end{enumerate}

\end{definition}

It is clear that, in (2) above, the sequence $(y_j)_{j \in \omega}$ together with the metric in $M$ induces a computable Polish presentation of $C$.
Conversely, every computable Polish space can be computably isometrically embedded into $C[0,1]$ or the Urysohn space~\cite{UnivC01}.
Thus, even up to isometry, computable Polish and c.e.~closed presentations are equivalent.

\subsection{Computable Banach spaces}
To state the next definition, we need the notion of a (Type II) computable function.

\begin{definition}
Let $f\colon M \to N$ be a function between two computable Polish spaces. We say that $f$ is (Type II) \emph{computable} 
if it uniformly effectively turns fast Cauchy sequences into fast Cauchy sequences. More formally,
if there is a uniform sequence of operators $(\Phi_n)_{n \in \omega}$ such that on input a sequence $(x_{i_n})_{n \in \omega}$ with $d_M(x_{i_n}, x_{i_{n+1}}) < 2^{-n}$,
we have that $f( \lim_n x_{i_n}) = \lim_m \Phi^{(x_{i_n})_{n \in \omega}}_m$
and
$$d_N(\Phi^{(x_{i_n})_{n \in \omega}}_m, \Phi^{(x_{i_n})_{n \in \omega}}_{m+1}) < 2^{-m},$$
for all $m$.

\end{definition}

\begin{definition}\label{def:basp}
A computable (real) Banach space is given by a computable Polish space for the metric induced by the norm upon which the operation $+$ is computable.
\end{definition}

As explained in \cite{MelNg}, this approach is equivalent to the other standard definitions found throughout the literature. (See also~\cite[Lemma 2.4.17]{TheBook}.)

\subsection{Computably compact spaces} The definition below is equivalent to the one informally stated in the introduction.
\begin{definition}[\cite{MoriTsujiYasugi}]\label{def:1}
A computable Polish space $M$ upon a dense set $(x_i)_{i \in \omega}$ is \emph{computably compact} if there is a computable function which, given $n$,
outputs a finite tuple $i_0, \ldots, i_k $ of natural numbers such that
$M = B(x_{i_0}, 2^{-n}) \cup \ldots \cup B(x_{i_k}, 2^{-n}),$
i.e., it is a finite (basic) open $2^{-n}$-cover of the space.
\end{definition}

\begin{proposition}[Folklore combined with \cite{effcomp}] \label{prop:stuff} \rm
For a computable Polish space $M = \overline{(x_i)_{i \in \omega}}$, the following are equivalent:
\begin{enumerate}
\item $M$ is computably compact (Definition~\ref{def:1}).


\item Similar to Definition~\ref{def:1}, but we can additionally decide whether a given $2^{-n}$-ball from the $2^{-n}$-cover intersects
any $2^{-m}$-ball from the $2^{-m}$-cover (including the case when $m =n$). Additionally, two basic closed balls from the covers intersect if, and only if, 
the respective basic open balls intersect.

\item There is a computably enumerable list of \emph{all} finite open covers of the space by basic open balls.

\item There is an effective procedure which, given an enumeration of a countable cover of the space by basic open balls, outputs a finite sub-cover.

\item There is a computable function $h\colon \mathbb{N} \to \mathbb{N}$ such that
$M = \bigcup_{i \leq h(n)} B(x_i, 2^{-n}).$

\item $M$ is computably homeomorphic to a computably closed subset of the standard computable presentation of the Hilbert Cube.
\end{enumerate}
\end{proposition}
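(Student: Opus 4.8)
The plan is to establish the cycle $(5)\Rightarrow(1)\Rightarrow(3)\Rightarrow(4)\Rightarrow(1)\Rightarrow(5)$ together with the two ``side'' equivalences $(1)\Leftrightarrow(2)$ and $(1)\Leftrightarrow(6)$. Several links are immediate. For $(1)\Leftrightarrow(5)$: given the function of Definition~\ref{def:1} returning a tuple $i_0,\dots,i_k$ covering $M$ at resolution $2^{-n}$, put $h(n)=\max_j i_j$; conversely, from $h$ one outputs $0,1,\dots,h(n)$. For $(4)\Rightarrow(1)$ one feeds the subcover operator the full enumeration of $(B(x_i,2^{-n}))_{i\in\omega}$, which is a cover of $M$ by density, and reads off a finite $2^{-n}$-subcover, uniformly in $n$. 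For $(3)\Rightarrow(4)$, given an enumeration $B_0,B_1,\dots$ of a countable cover, search (using the c.e.\ list of all finite covers) for the least $k$ with $\{B_0,\dots,B_k\}$ a cover and output it; compactness guarantees such a $k$ exists. Finally $(3)\Rightarrow(1)$: among the c.e.\ list, find a finite cover all of whose radii are $\le 2^{-n}$ (such a cover exists, being a finite subcover of $(B(x_i,2^{-n}))_i$), and enlarge each ball to radius exactly $2^{-n}$, which preserves covering.

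The conceptual core is $(1)\Rightarrow(3)$, i.e.\ an effective Heine--Borel theorem: from computable $2^{-n}$-covers we must certify that the relation ``a given finite family $\mathcal{B}$ of basic open balls covers $M$'' is c.e. The key tool is the (uniform, folklore) fact that in a computably compact space the \emph{closed} basic balls are themselves uniformly computably compact; consequently, for a closed basic ball $\overline{B(a,s)}$ and any special point $b$ the quantity $\max_{y\in\overline{B(a,s)}}d(b,y)$ is a computable real, uniformly in the data. This lets us semi-decide genuine containment $\overline{B(a,s)}\subseteq B(b,t)$ (confirmed once the computed maximum drops below $t$), and hence $\overline{B(a,s)}\subseteq U$ for a c.e.\ open $U$. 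Now $\mathcal{B}$ covers $M$ \emph{if and only if} there is $n$ such that every closed ball $\overline{B(x_i,2^{-n})}$ of the canonical $2^{-n}$-cover lies in some member of $\mathcal{B}$: soundness is clear since the $B(x_i,2^{-n})$ already cover $M$, and completeness follows from the Lebesgue number lemma applied to the compact cover $\mathcal{B}$ together with $\mathrm{diam}\,\overline{B(x_i,2^{-n})}\le 2^{-n+1}\to 0$. The displayed condition is c.e.\ by the previous sentence. I expect this step --- phrasing covering through \emph{computable maxima of distance functions over computably compact closed balls}, rather than through naive triangle-inequality estimates, which are sound but not complete --- to be the main obstacle.

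For $(1)\Leftrightarrow(2)$ the implication $(2)\Rightarrow(1)$ is trivial. For $(1)\Rightarrow(2)$ I would again use that closed basic balls are uniformly computably compact, so that $\min\{d(y,z):y\in\overline{B(a,s)},\ z\in\overline{B(b,t)}\}$ is a computable real; this semi-decides disjointness of the closed balls, while nonemptiness of the intersection of the open balls is c.e.\ by searching for a witnessing special point. Comparing that minimum to $0$ is not decidable, so these two semidecisions are exhaustive only after ruling out the degenerate ``tangent'' configuration (closed balls meet, open balls do not). I would therefore normalise the covers, selecting their centres generically among the dense special points so that no two chosen closed balls are tangent --- equivalently, the relevant centre-distances avoid the effectively given dyadic values $2^{-n}\pm 2^{-m}$. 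On such normalised covers the two procedures decide intersection, and closed and open incidences coincide; this normalisation is the one fiddly point here.

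Finally, for $(1)\Leftrightarrow(6)$: the direction $(6)\Rightarrow(1)$ follows because the Hilbert cube $H$ is computably compact and $H\setminus C$ is c.e.\ open, so by the effective Heine--Borel theorem established above the relation ``a given finite family of special balls covers $C$'' (equivalently, ``$U\cup\bigcup_{i\in F}B(c_i,2^{-n})=H$'') is c.e.; searching over such families yields computable $2^{-n}$-covers of $C$. For $(1)\Rightarrow(6)$ I would use the effective Kuratowski embedding $\iota(x)=(\min(1,d(x,x_i)))_{i\in\omega}$ into $[0,1]^\omega$: this map is (Type II) computable, injective since $(x_i)_i$ is dense and separates points, and a homeomorphism onto its image because $M$ is compact. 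The image is $\Sigma^0_1$ (push the dense sequence forward) and computably compact (push the $2^{-n}$-covers forward through $\iota$), hence also $\Pi^0_1$, so it is a computable closed subset of the Hilbert cube, as required.
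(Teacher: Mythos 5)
The paper does not actually prove Proposition~\ref{prop:stuff}; it states it as folklore and defers entirely to the survey \cite{effcomp}. So the only thing to assess is the internal correctness of your argument, and there is a genuine gap at its conceptual core. Your step $(1)\Rightarrow(3)$ rests on the claim that in a computably compact space the formal closed balls $\overline{B(a,s)}=\{y: d(a,y)\le s\}$ are \emph{uniformly computably compact}, so that $\max_{y\in\overline{B(a,s)}}d(b,y)$ is a computable real. This is false. Such a ball is uniformly $\Pi^0_1$, but it need not carry a computable dense sequence: the special points of $M$ lying in the open ball are dense only in $B(a,s)$, and the sphere $\{y:d(a,y)=s\}$ may contain points not approximable from inside. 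Concretely, let $M_e=\{0,1\}$ if $e\notin K$ and $M_e=\{0,1+2^{-s}\}$ if $e$ enters $K$ at stage $s$; these are uniformly computably compact two-point spaces, yet $\max_{y\in\overline{B(0,1)}}d(0,y)$ equals $1$ or $0$ according as $e\notin K$ or $e\in K$, so it is not computable uniformly. The quantity is only \emph{upper} semicomputable (and the corresponding $\min$ over a product of closed balls, used in your $(1)\Rightarrow(2)$, is only \emph{lower} semicomputable). Indeed, if the maximum were computable one could semi-decide $\max>t$ and hence decide actual inclusion of balls, which is exactly the kind of thing that fails in this setting.

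The gap is repairable, but in a way that undercuts your stated motivation. Upper semicomputability of the maximum already suffices to semi-decide $\max<t$, i.e.\ $\overline{B(a,s)}\subseteq B(b,t)$; and more simply, the ``naive'' formal-inclusion test $d(a,b)+2^{-n}<t$, which you dismiss as sound but not complete, \emph{is} complete for your covering criterion once $n$ is allowed to grow: for each $x\in M$ pick $B(c_x,r_x)\in\mathcal{B}$ containing $x$ and set $\varepsilon_x=(r_x-d(x,c_x))/3$; extracting a finite subcover of $(B(x,\varepsilon_x))_x$ yields a uniform slack $\varepsilon>0$, and for $2^{-n+1}<\varepsilon$ every canonical $2^{-n}$-ball is \emph{formally} included in some member of $\mathcal{B}$. (It is the single-pair inclusion test that is incomplete, not the covering criterion.) This formal-inclusion route is the standard one and avoids any appeal to computable compactness of closed balls, which would in any case be circular as you invoke it to prove the very effective Heine--Borel statement it depends on. A second, smaller gap: in $(1)\Rightarrow(2)$ your normalisation --- choosing centres so that $d(a,b)$ avoids the values $2^{-n}\pm 2^{-m}$ --- does not rule out the degenerate configuration in a general metric space, where two closed balls can meet only at a point lying on both boundary spheres without the centre distance equalling the sum of the radii; the actual argument requires perturbing the \emph{radii} within a prescribed window and a more careful genericity argument. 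The remaining links ($(1)\Leftrightarrow(5)$, $(3)\Rightarrow(4)\Rightarrow(1)$, $(3)\Rightarrow(1)$, and both directions of $(1)\Leftrightarrow(6)$ modulo routine recentering of the covers onto special points of $C$) are essentially correct.
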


See \cite{effcomp} for a proof. For many more equivalent formulations of computable compactness, we cite \cite{effcomp}.
We will use the following  elementary consequence of
Proposition~\ref{prop:stuff}(2) that can also be found in \cite{effcomp}, though in a slightly different form.

\begin{lemma}\label{lem:Dlist} Let $D$ be a computably compact space. There exists a uniformly effective enumeration 
of all clopen partitions of $D$, where each clopen set is represented as a finite tuple of basic open balls or, equivalently, by the tuple 
describing the basic open balls with the same parameters.
Furthermore, we can uniformly decide which clopen sets in the list intersect and which don't.

\end{lemma}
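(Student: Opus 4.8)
The plan is to build the enumeration directly from the canonical covers supplied by computable compactness, organising them via the connected components of an intersection graph. First I would fix, using Proposition~\ref{prop:stuff}, the uniformly computable sequence of finite basic open covers $\mathcal{C}_n = \{B(x_{i_0}, 2^{-n}), \ldots, B(x_{i_{k_n}}, 2^{-n})\}$ of $D$, together with the decision procedure from part~(2) that tells us, for any two balls occurring in any of these covers (possibly of different radii), whether they intersect. The point of invoking~(2) is that disjointness of open balls is only co-c.e.\ in a general computable Polish space; computable compactness is exactly what upgrades this intersection predicate to a decidable one.

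For each $n$ I would form the finite \emph{intersection graph} $G_n$ whose vertices are the balls of $\mathcal{C}_n$ and whose edges join two balls precisely when they intersect, and then let $P_n$ be the partition of $\mathcal{C}_n$ into connected components. Writing $U$ for the union of the balls in a single component, the sets so obtained are open, cover $D$ (since $\mathcal{C}_n$ does), and are pairwise disjoint (balls in distinct components share no edge, hence no point of $D$). A finite disjoint open cover consists of clopen sets, so $P_n$ is a genuine clopen partition, each piece represented by a finite tuple of basic open balls as required. All of this is uniformly computable in $n$.

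It remains to see that coarsening the $P_n$ exhausts every clopen partition. Given an arbitrary clopen partition $\{V_1, \ldots, V_m\}$ of $D$, the pieces are pairwise disjoint closed subsets of a compact space, so $\delta := \min_{r \neq s} d(V_r, V_s) > 0$. Choosing $n$ with $2^{-n+1} < \delta$, no ball of $\mathcal{C}_n$ can meet two different pieces, so each ball lies in exactly one $V_r$; since two intersecting balls must then lie in the same piece, every connected component of $G_n$ is contained in a single $V_r$, i.e.\ $P_n$ refines $\{V_1, \ldots, V_m\}$. Hence $\{V_1, \ldots, V_m\}$ is recovered by grouping the components of $P_n$, and the desired enumeration is obtained by listing, over all $n$ and all set-partitions of the components of $P_n$, the corresponding coarsenings (every coarsening of a clopen partition is again a clopen partition). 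Finally, for any two clopen sets $U = \bigcup_{B \in S} B$ and $U' = \bigcup_{B' \in S'} B'$ appearing in the list, we have $U \cap U' \neq \emptyset$ if and only if $B \cap B' \neq \emptyset$ for some $B \in S$ and $B' \in S'$, which is decidable by Proposition~\ref{prop:stuff}(2); this yields the required uniform intersection test.

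The only genuinely non-routine ingredient I expect is the passage from a semidecidable to a decidable disjointness relation on balls, which is precisely where the computable compactness hypothesis is used; the completeness of the coarsening enumeration is then a soft topological consequence of compactness via the positive-distance (Lebesgue-number) argument above.
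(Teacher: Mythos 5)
Your proof is correct, and it is essentially the argument the paper has in mind: the lemma is stated as an ``elementary consequence of Proposition~\ref{prop:stuff}(2)'' with the details deferred to \cite{effcomp}, and your derivation --- connected components of the decidable intersection graph on the canonical $2^{-n}$-covers, plus the positive-distance (Lebesgue-number) argument showing every clopen partition is a coarsening of some $P_n$ --- is the standard way to fill in exactly those details. The one point worth making explicit is that each basic open ball is nonempty (it contains its special-point center, which lies in $D$), so no component of $G_n$ degenerates to the empty set; with that observation your argument is complete.
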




\section{Proofs}

We begin with the proof of Theorem~\ref{thm:1}. Our proof of Theorem~\ref{thm:2} will rely on the techniques developed in the proof of Theorem~\ref{thm:1}.

\subsection{Feiner's Boolean algebra}
A \emph{c.e.\ presentation} of a Boolean algebra $\mathcal{B}$ is a pair $(\mathcal{C},E)$, where  $\mathcal{C}$ is a computable structure in the signature of Boolean algebras, $E$ is a c.e.\ congruence of the structure $\mathcal{C}$, and the quotient structure $\mathcal{C}/E$ is isomorphic to the algebra $\mathcal{B}$. Equivalently, a c.e.~presentation can be thought of as the quotient of a fixed natural computable presentation of the atomless Boolean algebra by a c.e.~ideal. A \emph{$\Delta^0_2$-presentation} of a Boolean algebra $\mathcal{B}$ is a $\Delta^0_2$-computable structure isomorphic to $\mathcal{B}$. 

For a linear order $\mathcal{L}$, the corresponding \emph{interval Boolean algebra} $\operatorname{IntAlg}(\mathcal{L})$ is the subalgebra of the Boolean algebra of all subsets of $\mathcal{L}$ such that $\operatorname{IntAlg}(\mathcal{L})$ is generated by the intervals of the form $[a,b)$ and $[a,+\infty)$, where $a<_{\mathcal{L}} b$.

Fix a c.e.~presented Boolean algebra with no computable presentation, \cite{feiner1970}.
We will follow the proof presented in \cite{TheBook}, which is however not really different from the original proof of Feiner.
We won't need to analyse the proof in full detail; instead, we only discuss a few features of this well-known construction
which will be needed in our proof, and we shall take the rest for granted.
We fix a $\Delta^0_2$-presentation (equivalently, a c.e.~presentation) of a Boolean algebra which has the form
$$B  = \sum_{i\in\omega} B_i,$$
where each $B_i$ is either $\operatorname{IntAlg}(\mathbb{Z}^n \mathbb{Q} +1 +\mathbb{Q})$ or 
$\operatorname{IntAlg}(\mathbb{Z}^{n+1} +1 +\mathbb{Q})$ for some $n\geq 1$.
(We could've used  $\omega^n \mathbb{Q}$ vs.~$\omega^{n+1} $ instead.) The linear orders are uniformly $\Delta^0_2$-presented.
We shall need that, furthermore,
\begin{equation}\label{eq:1} \mbox{the extra separating intervals } 1 + \mathbb{Q} \mbox{ in each case are (uniformly) computable.}
\end{equation}
This is possible because these separators are set ahead of time in the construction, and the diagonalisation
happens at the $\mathbb{Z}^n \mathbb{Q}$ vs.~$\mathbb{Z}^{n+1} $ components.  We do not need to know any further details of Feiner's construction; we refer to \cite{TheBook} for a detailed and self-contained exposition of the proof.

We shall represent $B$ using tree-bases, in the sense of \cite{GonBoo}. More specifically, we  represent $B$ as a $\Delta^0_2$-tree with $\Pi^0_2$-atoms/terminal nodes.
Without loss of generality, we can instead realise $B$ as a $\Pi^0_1$-tree $T \subseteq \omega^{<\omega}$ with $\Pi^0_2$-atoms/terminal nodes\footnote{This is a standard trick, which we omit. The idea is to realise $T$ in $\omega^{<\omega}$. We will approximate $T$ using the Limit Lemma and
mimic the process in $\Gamma \subseteq \omega^{<\omega}$. If a node leaves the tree $T$ but is later put back into $T$, instead of reintroducing the node in $\Gamma$, pick a fresh node in $\omega^{<\omega}$. By induction, $T \cong \Gamma$. This makes the tree $\Pi^0_1$; however, the property of being a terminal node remains $\Pi^0_2$.}.
Indeed, by property (\ref{eq:1}), we can assume that:
\begin{equation}\label{eq:2} \mbox{the intervals } 1 + \mathbb{Q} \mbox{ uniformly correspond to computable subtrees in $T$.}
\end{equation}
As before, the reason for that is pretty straightforward~--- the interesting non-computable properties of the tree will be concentrated at the spots where the diagonalisation happens\footnote{In other words, in the notation of the previous footnote, there is no need to imitate the Limit Lemma in the subtree of $\Gamma$ that corresponds to these computable subtrees.}.

Another way to view this property is that the tree $T$ is a ``fishbone'' of trees $T_i$ (see Fig.~\ref{Fig: fish}), where
each $T_i$ is either a complete binary tree (denoted as $\boxed{\eta}$) or one of the trees used in diagonalisation (empty box), and \emph{we can compute the nodes that serve as their roots}.
(There roots are nodes of the form $0^{i}1$.)

\begin{figure}[h]
	
	\begin{tikzpicture}
		[level 1/.style={sibling distance=20mm},
		level 2/.style={sibling distance=20mm},
		level distance=10mm]
		\node[fill, circle, scale=0.5] {}
		child {[fill] circle (2pt)
			child {[fill] circle (2pt)
				child {[fill] circle (2pt)
					child {[fill] circle (2pt)}
					child {node[draw, rectangle, minimum size=4mm, inner sep=0pt] {\(\eta\)}}
				}
				child {node[draw, rectangle, minimum size=4mm, inner sep=0pt] {}}
			}
			child {node[draw, rectangle, minimum size=4mm, inner sep=0pt] {\(\eta\)}}
		}
		child {node[draw, rectangle, minimum size=4mm, inner sep=0pt] {}};
		
		\node at (-4,-4.5) {\(\ldots\)};
	\end{tikzpicture}
	\caption{The tree $T$.}\label{Fig: fish}
	
\end{figure}

\

We assume that $T$ is \emph{binary}, and furthermore, at every stage $s$ of $\Pi^0_1$-approxima\-tion $T_s$ of $T$ and each $\sigma \in T_s$,
\begin{equation}\label{eq:3} \sigma \mbox{ is either terminal or has exactly two children $\sigma \hat{\empty}(2m+1)$ and $\sigma \hat{\empty}(2m+2)$,}
\end{equation}
 for some $m \geq 0$. (The nodes of the form $\sigma \hat{\empty}0$ are reserved for a different purpose.)

If the children of $\sigma$ are removed from $T$, they will never appear again in the approximation. 
However, in this case they will be immediately replaced with another potential pair of children, unless $\sigma$ itself leaves $T$.

\

The plan is to use this tree representation to turn the $\Delta^0_2$ Boolean algebra $B$ into a c.e.~closed subset $C(B)$ of $[0,1]$, and then argue that this c.e.~closed set
cannot have a computably compact presentation.

\subsection{Turning $B$ into a subset of $[0,1]$}
First, we define a sequence of disjoint closed subintervals of $[0,1]$:
$$  I_{0}, I_{2}, \ldots, I_{2n}, \ldots \big( \mbox{concentration point}~\dfrac{1}{2} \big) \ldots, I_{2n+1}, \ldots, I_{3}, I_{1},$$
so that the length of each $I_{m}$ is equal to (say) $2^{-m-2}$. We associate $\langle m \rangle  \in \omega^{<\omega}$ with $I_{m}$.
 Iterate this by scaling the described above pattern in $[0,1]$ by $2^{-n-2}$ and copying it into  $I_{\sigma}$.
To define $I_{\sigma\hat{\empty}m}$, $ m \in \omega$, use the $m^{th}$ sub-interval of $I_{\sigma}$.

Each interval will be associated with a node of $\omega^{<\omega}$, while the central limit point won't be associated with any node.
Indeed, we associate an interval with the entire clopen subset $[\sigma]$ of $\omega^{\omega}$ consisting of all extensions of $\sigma$.

We are now ready to describe the construction of the closed set $C = C(T)$.
Before we proceed,  recall Property (\ref{eq:2}). Recall also that $T$ splits into a sequence of subtrees $T_i$, where we can effectively see which 
of these subtrees are complete binary trees. (This is true even though the tree itself is merely $\Pi^0_1$.)
The construction is split into two sub-constructions.
\subsubsection{The full binary tree sub-construction} If $T_i$ is the full binary tree, and $\sigma = 0^i 1$ is its root,  then uniformly turn interval $I_{\sigma}$ into
a (computably compact) standard copy of the Cantor middle third set.

\subsubsection{The atomic  sub-construction} Assume $T_i$ is not a full binary tree. We shall abuse our notation slightly and will write $T$ instead of $T_i$.
However, the reader should keep in mind that the procedure described below is restricted only to the nodes in $T_i$ and to the respective intervals.

\

Since we will need to place points in the intervals, we need to describe where to place them. This is done as follows:
\begin{equation}
\mbox{If $\sigma$ is terminal in $T_s$ and $I_\sigma \cap C = \emptyset$, then place a  point in $I_{\sigma \hat{\empty}0}\cap C$.}
\end{equation}
We can use any rational point to perform this action; its (strong) index can be calculated uniformly in $\sigma$.
 The two children of each non-terminal $\sigma \in T_s$ 
will be associated with intervals of the form  $I_{\sigma\hat{\empty}(2m+1)}$ and $I_{\sigma \hat{\empty}(2m+2)}$, respectively; see (\ref{eq:3}) and Fig.~\ref{Fig:children}. 
In particular, $I_{\sigma \hat{\empty}0}$ can be used in the construction at most once, and only to put a point into it.

\

\begin{figure}[h]
	\begin{tikzpicture}[level 1/.style={sibling distance=20mm},
		level 2/.style={sibling distance=20mm},
		level distance=10mm]
		
		\node {$\sigma$}
		child {node {$\sigma\hat{\empty}(2m+1)$}}
		child {node {$\sigma \hat{\empty}(2m+2)$}}
		;
		
		\draw (-2.5,-2) -- (2.5,-2);
		
		\node at (-2.5,-2) {\Large$[$};
		\node at (2.5,-2) {\Large$]$};
		
		\node at (2.7, -2.3) {$I_{\sigma}$};
		
		\node at (-2,-2) {$[$};
		\node at (-1,-2) {$]$};
		
		\node at (2,-2) {$]$};
		\node at (1,-2) {$[$};
		
		\node[circle, fill, scale=0.3] at (-1.7,-2) {};
		\node[circle, fill, scale=0.3] at (-1.3,-2) {};
		
		\node[circle, fill, scale=0.3] at (1.3,-2) {};
		\node[circle, fill, scale=0.3] at (1.7,-2) {};
		
		\node at (-1.5,-2.5) {$I_{\sigma \hat{\empty}(2m+1)}$};
		\node at (1.5,-2.5) {$I_{\sigma \hat{\empty}(2m+2)}$};
		
	\end{tikzpicture}
	\caption{Construction for a non-terminal $\sigma$.}\label{Fig:children}
\end{figure}
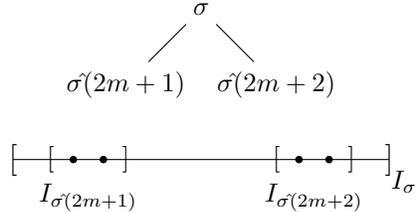

We will also need to densify some parts of $I_\sigma$, so that if $\sigma$ is an atom in $T$ (this is $\Pi^0_2$), then we end up with $I_\sigma \cong [0,1]$.
For that, suppose  $\sigma_1, \sigma_2 \in T_s$ are new successors of $\sigma \in T_s$, 
meaning that a different pair of nodes $\tau_{1}, \tau_{2}$ served as successors of $\sigma$ in $T_{s-1}$. 
(This excludes the case when $\sigma_1, \sigma_2 \in T_s$ are the first-ever successors of $\sigma \in T_{s-1}$ that must appear in $T_s$, according to (\ref{eq:3}).)
In this case, 
\begin{equation}\label{eq:5}
\mbox{begin densifying $I_\sigma$ to the left of $I_{\sigma_1}$ and to the right of $I_{\sigma_2} $,}
\end{equation}
which is done by placing another point between any pair of points already enumerated into $C \cap I_\sigma$ which are  outside of the intervals $I_{\sigma_1}$ and $I_{\sigma_2} $ (see Fig.~\ref{Fig:densify} below).
(We can use rational points in the respective intervals.)

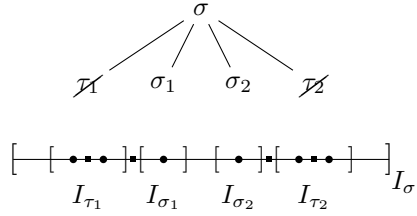
\begin{figure}[h]
	\begin{tikzpicture}[level 1/.style={sibling distance=10mm},
		level 2/.style={sibling distance=10mm},
		level distance=10mm]
		
		\node {$\sigma$}
		child {node {$\cancel{\tau_{1}}$}}
		child {node {$\sigma_{1}$}}
		child {node {$\sigma_{2}$}}
		child {node {$\cancel{\tau_{2}}$}}
		;
		
		\draw (-2.5,-2) -- (2.5,-2);
		
		\node at (-2.5,-2) {\Large$[$};
		\node at (2.5,-2) {\Large$]$};
		
		\node at (2.7, -2.3) {$I_{\sigma}$};
		
		\node at (-2,-2) {$[$};
		\node at (-1,-2) {$]$};
		
		\node at (2,-2) {$]$};
		\node at (1,-2) {$[$};
		
		\node at (-1.5,-2.5) {$I_{\tau_{1}}$};
		\node at (1.5,-2.5) {$I_{\tau_{2}}$};
		
		\node at (-0.8,-2) {$[$};
		\node at (-0.2,-2) {$]$};
		
		\node at (0.2,-2) {$[$};
		\node at (0.8,-2) {$]$};
		
		\node at (-0.5,-2.5) {$I_{\sigma_{1}}$};
		\node at (0.5,-2.5) {$I_{\sigma_{2}}$};
		
		\node[circle, fill, scale=0.3] at (-1.7,-2) {};
		\node[circle, fill, scale=0.3] at (-1.3,-2) {};
		
		\node[circle, fill, scale=0.3] at (1.3,-2) {};
		\node[circle, fill, scale=0.3] at (1.7,-2) {};
		
		\node[circle, fill, scale=0.3] at (-0.5,-2) {};
		\node[circle, fill, scale=0.3] at (0.5,-2) {};
		
		\node[rectangle, fill, scale=0.3] at (-1.5,-2) {};
		\node[rectangle, fill, scale=0.3] at (-0.9,-2) {};
		\node[rectangle, fill, scale=0.3] at (1.5,-2) {};
		\node[rectangle, fill, scale=0.3] at (0.9,-2) {};
		
	\end{tikzpicture}
	\caption{Densifying intervals.}\label{Fig:densify}
\end{figure}

Note that if $\sigma$ is not terminal in $T$, then this action will result in only finitely many points placed outside of $I_{\sigma_1} \cup I_{\sigma_2}$ (i.e., in $I_\sigma  - (I_{\sigma_1} \cup I_{\sigma_2} )$), and none of these extra points will be
between  $I_{\sigma_1}$ and  $I_{\sigma_2}$.  Otherwise, if $\sigma$ is terminal in $T$, we will end up splitting $\sigma$ into a pair of successors over and over again.
Since the middle of $I_\sigma$ is the limit point of $I_{\sigma\hat{\empty}i}$, $i \in \omega$, we shall end up with $C\cap I_{\sigma}$ being the whole of $I_\sigma$ (after we take the closure).

\

This completes the description of $C$. Clearly, $C$ is a c.e.~closed subset of $[0,1]$. 

\subsection{Properties of $C$} We summarise several properties of $C$ that should be clear from the construction.
We begin with the Cantor-like clopen components.
\begin{property}\label{pr:at}
The atomless parts of $B$ (effectively) correspond to clopen copies of the Cantor set. 
\end{property}
This property holds because we did not use any dynamic approximation on the atomless parts, they were handled uniformly by a separate sub-construction.

\

Any clopen connected subset of $C$ is either an isolated point or is a copy of the unit interval.
We now analyse the clopen components that are homeomorphic to~$[0,1]$. 

\begin{definition} 
A clopen subset of $C$ homeomorphic to $[0,1]$ will be called an \emph{intom}.
\end{definition}

The intuition is that, of course, intoms represent the atoms of the Boolean algebra~$B$; however, they are intervals.
Intoms of $C$ are evidently stable under taking the Cantor--Bendixson (CB) derivative (i.e., any intom $D$ is contained in the $\alpha^{th}$ CB derivative $C^{(\alpha)}$ of $C$). However, there exist closed subsets $D$ of $[0,1]$
that contain intoms that are not clopen in $D$: for example, consider a sub-interval in which one (or both) points are limit-points 
of isolated points. One can also think of a closed sub-interval whose boundary is a limit of 
intoms.
Our $C$ is of course not like that:
 
 \begin{property}\label{pr:in}
 If $C'$ is the CB-derivative of $C$, then no maximal path-connected component $I$ in $C'$ is a limit point of a sequence $(x_i)$  with  $x_i \in C - I$.
\end{property}

In other words, no interval-like maximal path-connected component of $C$ can be a limit of isolated points.
This property follows from the fact that every ``densified'' subinterval has the form $I_\sigma$ for some $\sigma$; see (\ref{eq:5}). Thus, it will be kept 
isolated from the rest of $C$, by the choice of the collection $(I_\tau)_{\tau \in \omega^{<\omega}}$.
The next property is certainly less evident, and it requires a more detailed proof.


\begin{lemma}\label{lem:1} We have that $C '' = C'$. (There is no clopen subcomponent of $C$ isomorphic to $\operatorname{IntAlg}(\omega)$.)

\end{lemma}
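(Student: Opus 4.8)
## Proof Proposal for Lemma~\ref{lem:1}

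The plan is to show that the second Cantor--Bendixson derivative $C''$ equals the first derivative $C'$ by establishing that $C'$ has no isolated points, i.e., that $C'$ is perfect. Equivalently, by the parenthetical reformulation, we must show that no clopen subcomponent of $C$ looks like $\operatorname{IntAlg}(\omega)$, which would manifest as a sequence of isolated points converging to a single non-isolated point that \emph{becomes} isolated in $C'$. The key structural fact we will exploit is that, by construction, the linear orders feeding the interval Boolean algebra $B$ are of the form $\operatorname{IntAlg}(\mathbb{Z}^n\mathbb{Q}+1+\mathbb{Q})$ or $\operatorname{IntAlg}(\mathbb{Z}^{n+1}+1+\mathbb{Q})$ with $n \geq 1$. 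Crucially, none of these orders has a point with exactly one side being a limit of a discrete (successor-type) sequence in a way that would produce an $\omega$-type clopen piece; the orders $\mathbb{Z}^n\mathbb{Q}$ and $\mathbb{Z}^{n+1}$ are built so that every ``accumulation'' in the associated Stone space is two-sided or dense.

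First I would make precise what a point of $C'$ can look like. A point $x \in C'$ is either an interior point of some intom $I_\sigma$ (a copy of $[0,1]$), a boundary point of an intom, or a genuine limit point of a sequence of distinct intoms/isolated points coming from the concentration-point structure of the intervals $(I_\sigma)$. By Property~\ref{pr:in}, an intom cannot be approached by isolated points of $C$ from outside, so each intom is a clopen $[0,1]$ sitting inside $C'$ with its own boundary accounted for \emph{within} the intom. Thus the only way $C'$ could acquire an isolated point is if some point $p$ of $C$ is the limit of a sequence of isolated points $(x_i)$ of $C$, while $p$ itself is isolated \emph{within} $C'$ — that is, $p$ has a clopen neighbourhood in $C'$ containing no other point of $C'$. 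Translating through the interval-to-tree correspondence, this would require a node $\sigma$ in (the tree representation of) some $T_i$ whose subtree below is an $\omega$-chain of atoms accumulating at a single endpoint, with nothing perfect nearby — exactly an $\operatorname{IntAlg}(\omega)$ component.

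The main step is then to rule this out by a direct analysis of the linear orders $\mathbb{Z}^n\mathbb{Q}$ and $\mathbb{Z}^{n+1}$. I would argue that in $\operatorname{IntAlg}(L)$ for $L \in \{\mathbb{Z}^n\mathbb{Q}+1+\mathbb{Q}, \mathbb{Z}^{n+1}+1+\mathbb{Q}\}$, the Stone space has the property that its first CB-derivative is perfect: there is no isolated point in the derivative because each ``limit of atoms'' point in these orders is itself a limit of further limit-points. For the $\mathbb{Q}$-factor and the dense $\mathbb{Z}^n\mathbb{Q}$ structure this is clear (density forces two-sided accumulation). For the $\mathbb{Z}^{n+1}$ factor with $n \geq 1$, a limit point of the $\mathbb{Z}$-successor chain is again a limit of such chains (since $n+1 \geq 2$), so it is not isolated in the derivative. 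The separating copies of $1+\mathbb{Q}$ contribute only a single isolated interval plus a dense piece, neither of which creates an $\omega$-component in the derivative. Assembling these over the $\sum_i B_i$ decomposition, and using that the complete-binary-tree parts yield Cantor sets (Property~\ref{pr:at}) that are already perfect, we conclude $C'$ is perfect, hence $C'' = C'$.

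The hard part will be the careful bookkeeping at the boundary between sub-constructions and at the densification step~(\ref{eq:5}): I must verify that the \emph{dynamic} approximation (points being placed via the terminal-node rule and the densification of $I_\sigma$ when successors are reassigned) does not accidentally create a lone $\omega$-sequence of isolated points converging to an otherwise-isolated limit. The key observation rescuing us is that whenever $\sigma$ is terminal in the true tree $T$, repeated resplitting fills $I_\sigma$ densely so that $C \cap I_\sigma = I_\sigma$ becomes an intom (a perfect interval, not an $\omega$-limit), and whenever $\sigma$ is non-terminal, only finitely many stray points land outside $I_{\sigma_1}\cup I_{\sigma_2}$ and never between them. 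Hence no surviving accumulation point of isolated points can be isolated in $C'$: it is always either interior to an intom or a genuine two-sided/dense limit inherited from the $\mathbb{Z}^n\mathbb{Q}$ or $\mathbb{Z}^{n+1}$ structure. This is precisely the content that the $\operatorname{IntAlg}(\omega)$ clopen component is excluded, giving $C'' = C'$.
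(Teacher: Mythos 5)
Your overall shape is the same as the paper's (suppose $x\in C'\setminus C''$, dispose of the intom and Cantor-set cases, and chase the accumulating isolated points down the nested intervals $I_\sigma$), but the step you designate as the main one rests on a false claim. You assert that for $L=\mathbb{Z}^{n+1}+1+\mathbb{Q}$ the Stone space of $\operatorname{IntAlg}(L)$ has a perfect first Cantor--Bendixson derivative, and that ``every accumulation\ldots is two-sided or dense.'' This cannot be right: $\operatorname{IntAlg}(\mathbb{Z}^{n+1})$ is superatomic, so its Stone space is countable and compact, and a nonempty perfect subset of a Polish space is uncountable. Concretely, the derivative of the Stone space of $\operatorname{IntAlg}(\mathbb{Z}^{2})$ is essentially the Stone space of $\operatorname{IntAlg}(\mathbb{Z})$, which is again full of isolated points. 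Indeed, the entire point of Feiner's coding is that $\mathbb{Z}^{n}\mathbb{Q}$ and $\mathbb{Z}^{n+1}$ have \emph{different} derivative behaviour, so an argument treating both as ``perfect after one derivative'' cannot be the mechanism that makes the lemma true.

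The ingredient you are missing is that the atoms of $B$ are not realised as points of $C$ but as \emph{intoms} --- nondegenerate closed intervals, which are perfect and therefore survive every CB derivative; only the finitely-many-per-split ``junk'' points are isolated in $C$. The paper then uses that $\operatorname{IntAlg}(\mathbb{Z}^{n}\mathbb{Q})$ and $\operatorname{IntAlg}(\mathbb{Z}^{n+1})$ are \emph{atomic}: a small closed $J\ni x$ consisting only of isolated points is forced, split by split, into a nested sequence $I_\sigma\supset I_{\sigma_1}\supset\cdots$ along an infinite path of $T$, and atomicity means every such path shares arbitrarily long initial segments with terminal nodes; hence $J$ (and ultimately $x$) is a limit of intoms, i.e., of points of $C'$, contradicting $x\notin C''$. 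Your closing paragraph has the right bookkeeping about terminal versus non-terminal nodes, but the conclusion that $x$ is ``a two-sided/dense limit inherited from the order'' does not follow from it and is not what saves the lemma. Note that your argument nowhere uses that atoms become intervals rather than points --- yet without that fact the lemma is simply false, since the $\mathbb{Z}^{n+1}$ components would then contain clopen pieces homeomorphic to the Stone space of $\operatorname{IntAlg}(\omega)$. Replace the ``perfect derivative of the Stone space'' claim by ``atomicity of the coding algebras plus persistence of intoms under the derivative'' and the proof goes through.
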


\begin{proof}

Suppose $x\in C$ is a limit point, but not a limit point in $C'$. (I.e., suppose $x \in C' - C''$.) Then  $x$ cannot be a part of an intom, and it cannot lie in a copy of the Cantor set.
 If $x$ in not in the Cantor-like interval, then it is contained in a clopen component produced in the second sub-contruction
corresponding to either  $\operatorname{IntAlg}(\mathbb{Z}^n \mathbb{Q})$ or $\operatorname{IntAlg}(\mathbb{Z}^{n+1})$, depending on the respective outcome of the Feiner's procedure.
Then $x$ is a limit of a sequence $(x_i)_{i\in\omega}$, where all $x_i$ are isolated, and indeed, every sufficiently small (closed) interval $J \ni x$ contains \emph{only} isolated points, and infinitely many of those.

However, in the construction, any interval $I_\sigma$ contains an isolated point if, and only if, $I_\sigma$ contains $I_{\sigma_1}$ and $I_{\sigma_2}$, where
 $\sigma_1$ and $\sigma_2$ are children of $\sigma$ (in $T$). (In other words, this can happen iff $I_\sigma$ corresponds to an element of the Boolean algebra $B$ that splits non-trivially.)
In this case there are only finitely many atoms in $I_\sigma - (I_{\sigma_1}\cup I_{\sigma_2})$; see the discussion after (\ref{eq:5}).

By our assumption about $x$, it should be that $J$ also contains points in $I_{\sigma_1}$ or $I_{\sigma_2}$, say, the former. Repeat the argument above for $I_{\sigma_1}$,
we get that that $\sigma_1$ also has to split in $T$, and that $J$ must intersect one of the two intervals that represent these children.
Since $J$ was closed, we get that $J$ intersects the limit point of such intervals, none of which represent an atom of the Boolean algebra. In particular, none of these
sub-intervals is an intom. However, both algebras $\operatorname{IntAlg}(\mathbb{Z}^n \mathbb{Q})$ and $\operatorname{IntAlg}(\mathbb{Z}^{n+1})$ are atomic. Thus, every infinite path through  the tree $T$ 
contains arbitrarily long initial segments common with a terminal node in $T$.
Passing to $C$, this implies that $J$ must contain a limit point of intoms. Since $J$ was arbitrarily small, this makes $x$ a limit point of intoms.
This contradicts the choice of $x \in C' - C''$.
\end{proof}

We now summarise the properties and the lemma above in an informal description of $C'$. The description will be made formal using a certain operator, which we will define shortly in the next subsection.

By Lemma~\ref{lem:1}, we have that $C'$ has no isolated points. Further, by Property~\ref{pr:in}, every maximal path-connected component of $C'$ is indeed an intom. The intuition is that
the atoms are the `junk' produced by the construction of $C = C(T)$ from the $\Pi^0_1$ tree $T$, while whatever is `coded' by the intoms and the atomless parts actually corresponds to nodes in $T$.
In other words, $C'$ looks roughly like the Stone space $\hat{B}$ of the algebra $B$, the only difference is that every isolated point in $\hat{B}$ is replaced with a copy of the unit interval. 

\subsection{The reduction operator}
Let $C$ be a closed subset of $[0,1]$. 

\begin{definition}
Define \emph{the reduct} $C^r$ of $C \subseteq_{cl} [0,1]$ to be the Polish space obtained from $C$ by replacing every intom (i.e., clopen subset $\cong [0,1]$) by an isolated point.

\end{definition}

More formally, we introduce a closed equivalence relation $E$ on $C$ as follows: $x \,E\, y$ if and only if either $x = y$ or $x,y$ belong to the same intom of $C$. Then $C^{r}$ is equal to the quotient space $C/E$. We remark that a closed quotient of a compact metrised space is itself compact and metrizable (e.g., \cite[Theorem~4.2.13 or Theorem~4.4.15]{Engelking1989}), and thus (being a metrizable compact space) is Polish.  
However, in the case of a closed \( C \subseteq [0,1] \), we believe these facts are geometrically obvious, especially when all intoms are clopen (isolated) in \( C \): simply replace each such intom with its center point.

Now let  $C= C(T)$ produced by the construction, and let $[T]$ be the Stone space of the tree $T$ that is defined by first replacing every terminal node of $T$ by an isolated infinite path, and then using the usual ultrametric 
on the resulting tree.

\begin{lemma}\label{lem:2} For any $T$, 
$(C(T)')^r \cong [T]$.
\end{lemma}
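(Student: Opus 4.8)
The plan is to exhibit an explicit continuous bijection $\psi\colon [T]\to (C(T)')^r$ and then invoke compactness: since $T$ is binary, $[T]$ is compact, while $C(T)'$ is a closed (hence compact) subset of $[0,1]$ whose closed quotient $(C(T)')^r$ is again compact and metrizable, so a continuous bijection between them is automatically a homeomorphism. Throughout I would use the sets $U_\sigma := C(T)'\cap I_\sigma$, which are genuinely clopen in $C(T)'$ because the intervals $(I_\tau)$ were chosen pairwise separated by gaps around their concentration points (this is the content of Property~\ref{pr:in}); these descend to clopen sets $\bar U_\sigma$ in the reduct, since every intom lies inside a single $I_\sigma$ and hence $E$ respects the partition by intervals.

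First I would pin down the two kinds of points on each side. On the side of $[T]$, replacing each terminal node by an isolated branch makes the isolated points of $[T]$ exactly the terminal paths (one per terminal node of $T$), and the non-isolated points exactly the genuine infinite paths. On the side of $(C(T)')^r$, Lemma~\ref{lem:1} gives that $C(T)'$ has no isolated points, while Property~\ref{pr:in} guarantees that each intom is a connected component separated from the rest of $C(T)'$; hence collapsing the intoms yields exactly one isolated point per intom and creates no others. By the construction the intoms are precisely the intervals $I_\sigma$ with $\sigma$ terminal in $T$ (a terminal node is re-split forever, so the densification of~(\ref{eq:5}) fills $I_\sigma$), so the isolated points of $(C(T)')^r$ correspond bijectively to the terminal nodes of $T$, and I let $\psi$ match each terminal path with the collapsed intom it names.

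Next I would define $\psi$ on genuine paths and check bijectivity on the non-isolated points. For a genuine path $p$ running through an atomic subtree or along the fishbone spine, the intervals $I_{p\restriction n}$ are nested with lengths tending to $0$, so they shrink to a single point $y_p$; since each $I_{p\restriction n}$ meets $C(T)$, the point $y_p$ is a non-isolated point of $C(T)$ lying in no intom, and I set $\psi(p)=y_p$. For a genuine path in a full-binary subtree $T_i$ with root $\rho$, I instead use the intrinsic binary structure of the standard Cantor set $\mathcal{C}_\rho\subseteq I_\rho$ placed there by the full binary sub-construction: writing $p=\rho\hat{\empty}b$ with $b\in 2^\omega$, I put $\psi(p)=\gamma_\rho(b)$, where $\gamma_\rho\colon 2^\omega\to\mathcal{C}_\rho$ is the canonical homeomorphism. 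Bijectivity onto the non-isolated points reduces to the claim that every non-intom point of $C(T)'$ is either a Cantor point of some $\mathcal{C}_\rho$ or has an infinite interval address: a point whose interval address terminates lies in some $I_\sigma$ but in no $I_{\sigma\hat{\empty}m}$, hence is either one of the finitely many isolated ``junk'' points (removed in $C(T)'$) or a concentration centre, and for non-terminal $\sigma$ such a centre is not an accumulation point of $C(T)$ (only two child intervals carry structure; see the discussion after~(\ref{eq:5})), so it is absent from $C(T)'$ --- leaving exactly the two admissible cases.

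Finally I would verify bicontinuity by matching clopen bases. For each node $\sigma$ down to the subtree roots I would check $\psi([\sigma])=\bar U_\sigma$, and inside each full-binary subtree I would check $\psi([\rho\hat{\empty}b])=\gamma_\rho([b])$; thus the tree cylinders are carried onto the interval-clopens, respectively the Cantor-clopens, all of which are clopen, and since these sets generate the two topologies this yields continuity of $\psi$ (together with compactness, of $\psi^{-1}$). I expect the main obstacle to be precisely this last bookkeeping at the interface between the two sub-constructions: one must confirm that the coarse interval-clopens $\bar U_\sigma$ (valid down to the roots of subtrees) and the intrinsic Cantor-clopens (valid inside full-binary subtrees) agree on overlaps and jointly realise all tree cylinders, and that the concentration and spine points are correctly placed as images of the non-terminal infinite paths, rather than being spuriously isolated or omitted.
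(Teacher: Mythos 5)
Your proposal is correct and follows essentially the same route as the paper: you match tree cylinders $[\sigma]$ with the clopen pieces $C(T)'\cap I_\sigma$ of the interval construction (the paper's ``recursion on the depth of the subtree, associating a split in the tree to the respective clopen split, ignoring the atoms''), handle the full-binary components by a canonical homeomorphism with the Cantor set (the paper's back-and-forth), and send each isolated path of $[T]$ to the collapsed intom of the corresponding terminal node. Your version merely makes the point map and the continuity-plus-compactness verification explicit, and correctly isolates the two places where care is needed (junk points and concentration centres vanishing in $C'$, and the interface between interval-clopens and Cantor-clopens).
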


\begin{proof}
By Lemma~\ref{lem:1}, we have that $C' = C(T)'$ has no isolated points. 
The informal discussion after Lemma~\ref{lem:1} will perhaps convince the reader that Lemma~\ref{lem:2} indeed holds.
Formally,
to build a homeomorphism between the reduct $(C')^r$ and $[T]$, we
shall refer to the construction. The isomorphism on the components corresponding to $\operatorname{IntAlg}(\mathbb{Q})$ is the usual back-and-forth construction 
for Stone spaces with no isolated points (induced by the isomorphism of the dual atomless Boolean algebras). This is possible because the first sub-construction simply realises
the atomless part as the Cantor set.

In the intervals produced by the second sub-construction, and the respective clopen subspaces in $[T]$, we can
define the homeomorphism by recursion on the depth of the subtree.
For that, associate a split in the tree to the respective clopen split in the construction, ignoring the atoms. 
Further, 
 an isolated path $p\in [T]$ is mapped to the reduct of the 
respective intom. We leave the details to the reader. (Note that $\mathbf{0}''$ can perform this recursion. Indeed, we need only $\mathbf{0}'$ since we do not really need to know which elements are atoms ahead of time.)
\end{proof}

Since $[T]$ is homeomorphic to the Stone space of $B$,  we conclude that
$(C')^r \cong \widehat{B}$. In particular, $C(T) \cong C(\Gamma) $ provided that $[T] \cong [\Gamma]$.  
Since $\hat{B} \cong [T]$ is in turn fully determined by the isomorphism type of the Boolean algebra $B$,
it makes sense to write $C(B)$ instead of $C(T)$ for the closed set $C$ constructed earlier.

\begin{notation}
Write $R(C)$ to denote $(C')^r$, and write $C(B)$ for $C(T)$ with $[T] \cong \hat{B}$.
\end{notation}

We arrive at:

\begin{proposition}\label{pr:du} Let $B$ be a Boolean algebra. Then  $R(C(B)) \cong \hat{B}.$

\end{proposition}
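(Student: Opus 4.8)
The plan is to read off the proposition as an immediate consequence of Lemma~\ref{lem:2} together with the identification $[T]\cong\hat B$ that is already built into the meaning of the symbol $C(B)$. Unfolding the \textbf{Notation}: the expression $R(C(B))$ abbreviates $(C(B)')^r=(C(T)')^r$, where $T$ is any tree serving as a tree-basis for $B$, chosen precisely so that its Stone space satisfies $[T]\cong\hat B$. Thus the proposition asks only that we chain two homeomorphisms that are already available.

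First I would invoke Lemma~\ref{lem:2}, which for any tree $T$ supplies a homeomorphism $(C(T)')^r\cong[T]$; this is the substantive step, but it has already been carried out via the recursion-on-depth argument (back-and-forth on the atomless Cantor components, matching each split in $T$ to the corresponding clopen split in the construction on the atomic components, and sending each isolated path of $[T]$ to the reduct of the corresponding intom). Second, I would use the standard tree-basis fact that $[T]\cong\hat B$, i.e.\ that the space of infinite paths through the tree-basis $T$ (with terminal nodes expanded to isolated paths, as in the definition of $[T]$) is homeomorphic to the Stone space of the Boolean algebra it presents in the sense of \cite{GonBoo}. This is exactly the identification already used in the excerpt to pass from $[T]$ to $\hat B$. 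Composing the two, $R(C(B))=(C(T)')^r\cong[T]\cong\hat B$, as required.

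Since both ingredients are in hand, there is essentially no genuine obstacle remaining; the only point worth checking is coherence of the notation, which I would address explicitly. The symbol $C(B)$ is defined only relative to a choice of presenting tree $T$, so I would verify that the homeomorphism type of $R(C(B))$ is independent of this choice: if $T,\Gamma$ are two tree-bases with $[T]\cong[\Gamma]\cong\hat B$, then Lemma~\ref{lem:2} gives $(C(T)')^r\cong[T]\cong[\Gamma]\cong(C(\Gamma)')^r$, so $R$ is well defined on $B$ up to homeomorphism. This is consistent with (and weaker than) the remark preceding the \textbf{Notation} that even $C(T)\cong C(\Gamma)$ whenever $[T]\cong[\Gamma]$, and with that observation the proposition is complete.
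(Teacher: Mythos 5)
Your proof is correct and follows essentially the same route as the paper, which derives the proposition directly by composing the homeomorphism $(C(T)')^r\cong[T]$ of Lemma~\ref{lem:2} with the standard identification $[T]\cong\hat B$ coming from the tree-basis representation. Your additional check that $R(C(B))$ is independent of the choice of presenting tree is a reasonable (and correct) elaboration of the paper's remark that $C(T)\cong C(\Gamma)$ whenever $[T]\cong[\Gamma]$.
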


We could view $C(B)$ as an algebra of clopen sets. It is a Boolean algebra with a unary predicate $\mathtt{in}$ that can potentially hold on $a$ only if  $a$ is an atom.
  Then every atom for which $\mathtt{in}(a)$ holds should be thought of as an intom. Indeed, we suspect  that a duality holds between 
such algebras and the closed subsets of $[0,1]$ that satisfy Property~\ref{pr:in}; we however, will not need it. The duality with Boolean algebras
induced by Proposition~\ref{pr:in} (when combined with Stone duality) will be sufficient for our purposes.

Proposition~\ref{pr:du}  can be reformulated in terms of the algebra $A= A(C)$ of clopen sets of $C$ defined as we explained in the previous paragraph. For that,  we first define $A^{a}$ to be the quotient of $A$ by the ideal generated by 
the `usual' atoms (for which the unary predicate $\mathtt{in}$ fails), and then define $r(A)$ to be the Boolean algebra $A^a$ in which the predicate $\mathtt{in}$ is ignored.
We then have:

\begin{proposition}\label{pr:du1} Let $B$ be a Boolean algebra. Then   $r(A(C(B))) \cong B$. 

\end{proposition}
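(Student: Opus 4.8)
The plan is to obtain this statement from Proposition~\ref{pr:du} by dualising, so that all of the topological content is inherited from the already-established isomorphism $R(C(B)) \cong \hat{B}$. The guiding idea is that the algebraic operation $A \mapsto r(A)$ is exactly the Stone dual of the topological operation $C \mapsto R(C) = (C')^{r}$; once this is made precise, the proposition follows by applying $\mathrm{Clopen}(\cdot)$ to Proposition~\ref{pr:du}.

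First I would identify the Stone space of the clopen algebra $A(C)$. Since $C$ is compact Hausdorff, the Stone space of $\mathrm{Clopen}(C)$ is homeomorphic to the space of connected components of $C$ with the quotient topology (for compact Hausdorff spaces, quasi-components coincide with components). By construction, the only connected components of $C$ with more than one point are the intoms: the Cantor-like pieces and the pieces produced by the atomic sub-construction are totally disconnected apart from the intoms they contain. Hence the component space of $C$ is exactly the reduct $C^{r}$, and $S(A(C)) \cong C^{r}$. Under this identification an atom of $A(C)$ on which $\mathtt{in}$ holds corresponds to an isolated point of $C^{r}$ obtained by collapsing an intom (isolated by Property~\ref{pr:in}), whereas a `usual' atom corresponds to a genuinely isolated point of $C$.

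Next I would dualise the quotient defining $A^{a}$. By Stone duality, quotienting $A(C)$ by the ideal generated by the usual atoms deletes precisely the associated isolated points from $S(A(C)) = C^{r}$, so that $S(A^{a}) \cong C^{r} \setminus \{\text{genuinely isolated points of } C\}$. I claim this space is $R(C) = (C')^{r}$. Indeed $C' = C \setminus \{\text{isolated points of } C\}$, and by Lemma~\ref{lem:1} together with Property~\ref{pr:in} the limit points of the deleted isolated points all lie in the Cantor-like parts and never in an intom; since the intoms are clopen in $C$, collapsing them commutes with deleting the isolated points. Thus forming the space by collapsing intoms and then deleting isolated points agrees with the Cantor--Bendixson passage followed by collapsing, and $S(A^{a}) \cong (C')^{r}$. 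Forgetting the predicate, this gives $r(A(C(B))) = A^{a} \cong \mathrm{Clopen}(S(A^{a})) = \mathrm{Clopen}(R(C(B)))$, and by Proposition~\ref{pr:du} together with Stone duality $\mathrm{Clopen}(R(C(B))) \cong \mathrm{Clopen}(\hat{B}) \cong B$, as required.

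The step I expect to be the main obstacle is the identification $S(A^{a}) \cong (C')^{r}$. One must check that deleting the (possibly infinite and accumulating) family of isolated points from the compact space $C^{r}$ really produces a Stone space --- which it does, because the corresponding union of basic clopen sets is open and its complement is therefore closed and compact --- and, more importantly, that no accumulation point of the deleted isolated points is a collapsed intom. This last point is exactly what Property~\ref{pr:in} and Lemma~\ref{lem:1} guarantee; the remainder is routine Stone-duality bookkeeping.
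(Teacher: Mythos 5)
Your proposal is correct and follows essentially the same route as the paper, which presents Proposition~\ref{pr:du1} precisely as the Stone-dual reformulation of Proposition~\ref{pr:du} (the paper leaves the duality bookkeeping implicit, whereas you spell out the identifications $S(A(C)) \cong C^{r}$ and $S(A^{a}) \cong (C')^{r}$, correctly isolating Property~\ref{pr:in} and Lemma~\ref{lem:1} as the facts that make the collapsing and deletion operations commute).
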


The proposition applies to the transformation $C$ defined  by the construction.  (More generally, satisfying Property~\ref{pr:in} is perhaps enough, but we shall not verify this conjecture.)

\subsection{Complexity analysis} We begin with a sequence of elementary lemmas.
Throughout this subsection, we assume that $D$ is a computably compact presentation of $C(B)$.

Recall that computable compactness of $D$ can be used to uniformly produce an effective list of all clopen components in $D$, and also, to  decide whether these components intersect; in the latter case we can produce the intersection~(Lemma~\ref{lem:Dlist}).

We fix such a list. In the list, every clopen set is given by a strong index describing a tuple of closed or open balls covering the component and isolating it from the rest of the components.
\emph{We identify a clopen set from the list with this index.}

Our next task is to produce complexity estimates for various properties of such clopen sets that will be important in effectively reconstructing $B$ from $D$
(indeed, from $R(D)$).

For clopen sets $X, Y \subseteq D$, write $X \sim Y$ if $X\cap D' = Y\cap D'$.


\begin{lemma}
``$X \sim Y$'' is $\Sigma^0_2$.
\end{lemma}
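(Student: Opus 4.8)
The plan is to reduce the derivative condition to a finiteness condition. First I would observe that since $X$ and $Y$ are clopen, so is their symmetric difference $Z := (X \setminus Y) \cup (Y \setminus X)$, and $X \cap D' = Y \cap D'$ holds if and only if $Z \cap D' = \emptyset$. Because $Z$ is open, a point $z \in Z$ is a limit point of $D$ precisely when it is a limit point of $Z$; and because $Z$ is closed in the compact space $D$, it is itself compact. Thus $Z \cap D' = \emptyset$ asserts exactly that $Z$ is a compact space with no limit points, which is equivalent to $Z$ being finite. Hence $X \sim Y$ if and only if the clopen set $Z$ is finite. Using Lemma~\ref{lem:Dlist} together with the decidability of intersections of clopen sets, an index for $Z$ (as a tuple of basic balls) can be computed uniformly from the indices of $X$ and $Y$.

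It then remains to bound the complexity of the predicate ``the clopen set $Z$ is finite''. I would show that its negation, ``$Z$ is infinite'', is $\Pi^0_2$ by writing it as $\forall m\,(Z$ contains at least $m$ points$)$. The inner predicate is $\Sigma^0_1$: to witness $m$ distinct points it suffices to exhibit $m$ pairwise disjoint nonempty clopen subsets of $Z$. By Lemma~\ref{lem:Dlist} we may enumerate all clopen partitions of $D$ and, for each part, decide uniformly whether it is nonempty, whether it is contained in $Z$ (equivalently, disjoint from the clopen complement of $Z$), and whether two given parts are disjoint. Searching over partitions in the enumeration and over choices of $m$ parts meeting these decidable conditions yields a c.e.\ definition of ``$Z$ has at least $m$ points''.

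The one point requiring care is the \emph{completeness} of this search: if $Z$ genuinely has at least $m$ points, a witnessing configuration must actually occur in the enumeration. This holds because $D$ is a compact metric (hence Hausdorff) space, so $m$ distinct points of $Z$ can be separated by pairwise disjoint open sets, and because Lemma~\ref{lem:Dlist} enumerates \emph{all} clopen partitions, including arbitrarily fine ones; for a sufficiently fine partition the parts containing these $m$ points are pairwise disjoint, and, since $Z$ is open, each such part lies inside $Z$. Granting this, ``$Z$ has at least $m$ points'' is $\Sigma^0_1$, so ``$Z$ is infinite'' is $\Pi^0_2$ and ``$Z$ is finite'' is $\Sigma^0_2$; combined with the reduction of the first paragraph, this shows that ``$X \sim Y$'' is $\Sigma^0_2$. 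I expect the main obstacle to be precisely this completeness/effectivity bookkeeping---matching the abstract topological separation of points in $D$ with the concretely enumerated clopen partitions---rather than any genuinely difficult combinatorics.
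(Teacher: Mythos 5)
Your first paragraph is fine and matches the paper's intent: reducing ``$X \sim Y$'' to ``the clopen symmetric difference $Z$ meets no limit point of $D$'', and observing that for clopen (hence compact, open) $Z$ this is equivalent to $Z$ being finite, is correct. The gap is in the second and third paragraphs, and it is exactly at the point you flagged as ``the one point requiring care''. Your witness for ``$Z$ has at least $m$ points'' is $m$ pairwise disjoint nonempty clopen subsets of $Z$, and your completeness argument appeals to ``arbitrarily fine'' clopen partitions separating any $m$ distinct points. But $D$ is not totally disconnected: by construction $C(B)$ (hence $D$) contains intoms, i.e., clopen connected components homeomorphic to $[0,1]$, as well as other nondegenerate connected pieces. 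A clopen partition can never split a connected component, so no clopen partition separates two points lying in the same intom. Concretely, take $X$ to be an intom together with one isolated point and $Y$ that isolated point alone; then $Z$ is the intom, which is infinite and satisfies $Z \cap D' = Z \neq \emptyset$, so $X \not\sim Y$. Yet your search never finds two disjoint nonempty clopen subsets of $Z$, so your $\Pi^0_2$ predicate declares $Z$ finite and your procedure wrongly concludes $X \sim Y$. Hausdorff separation by disjoint \emph{open} sets does not upgrade to separation by disjoint \emph{clopen} sets here.

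The paper avoids counting altogether: it writes the condition as ``each of $X \setminus (X\cap Y)$ and $Y \setminus (X \cap Y)$ is empty or a finite union of isolated points'', i.e., it existentially quantifies over a finite tuple of candidate isolated points (basic balls containing exactly one point of $D$), with ``this ball isolates a single point'' being $\Pi^0_1$ and ``the remainder of $Z$ is empty'' being $\Pi^0_1$, yielding $\Sigma^0_2$ directly. Your reduction to finiteness of $Z$ is salvageable, but the finiteness must be witnessed by exhibiting the isolated points themselves rather than by separating points with clopen sets.
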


\begin{proof}
We need to say that each of the $X \setminus X\cap Y$ and $Y \setminus X\cap Y$ is either empty or is a finite union of isolated points.
Being an isolated point is $\Pi^0_1$, and being empty is also $\Pi^0_1$.
\end{proof}

\begin{lemma} ``$X$ is an intom in $D$'' is $\Sigma^0_1 \& \Pi^0_1$ (i.e., it can be written as conjunction of a $\Sigma^0_1$-property and a $\Pi^0_1$-property).
\end{lemma}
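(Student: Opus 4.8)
The plan is to reduce the topological property ``$X\cong[0,1]$'' to the conjunction of connectedness and nondegeneracy, and then to estimate each conjunct separately. Recall from the structure of $C(B)$, which is preserved by the homeomorphism onto $D$, that the only connected clopen subsets are the isolated points and the intoms: within the Cantor-like parts every clopen set with more than one point splits, within the interval-like parts the only clopen subsets are the whole intom or the empty set (Property~\ref{pr:in} and Lemma~\ref{lem:1}), and the remaining clopen connected sets are isolated points. Since $X$ is already given as a clopen set from the list, we therefore have that $X$ is an intom if and only if $X$ is connected and $X$ is not a single point. It thus suffices to show that ``$X$ is connected'' is $\Pi^0_1$ and that ``$X$ is not a single point'' is $\Sigma^0_1$.

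First I would dispose of nondegeneracy. Since $X$ is clopen, membership of a special point $x_i$ in $X$ is decidable: ``$x_i\in X$'' is $\Sigma^0_1$ because $X$ is open, and ``$x_i\notin X$'' is $\Sigma^0_1$ because the complement of $X$ is open. Hence ``$X$ is not a single point'' can be written as the $\Sigma^0_1$ statement that there exist special points $x_i,x_j\in X$ with $d(x_i,x_j)>0$, the inner condition $d(x_i,x_j)>0$ being itself $\Sigma^0_1$. By the structural remarks above, for a clopen $X$ in $C(B)$ having at least two points is equivalent to being nondegenerate.

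Next I would treat connectedness by bounding its negation. A clopen subset of a computably compact space is again computably compact, and, crucially, for clopen sets (each presented by a tuple of balls together with a tuple of balls covering the complement) emptiness is decidable: nonemptiness is $\Sigma^0_1$, while emptiness is equivalent to the tuple covering the complement being an actual cover of $D$, which is again $\Sigma^0_1$ by Proposition~\ref{prop:stuff}; since exactly one of the two holds, both are decidable. Consequently disjointness (Lemma~\ref{lem:Dlist}), containment (via emptiness of the clopen difference), and the relation ``$U\cup V=X$'' are all decidable for clopen sets from the enumerated list. Therefore ``$X$ is disconnected'' is the $\Sigma^0_1$ statement that there exist clopen $U,V$ in the list with $U\cap V=\emptyset$, $U\cup V=X$, and $U,V$ both nonempty. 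Negating, ``$X$ is connected'' is $\Pi^0_1$, and combining the two conjuncts yields the desired $\Sigma^0_1\,\&\,\Pi^0_1$ form.

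The main obstacle is not the quantifier counting but securing the decidability of the Boolean bookkeeping on clopen sets~--- above all, decidability of emptiness~--- from computable compactness, and correctly justifying the reduction of the purely topological notion of an intom to ``connected and nondegenerate'' by appealing to the structural analysis of $C(B)$, which guarantees that a connected clopen set with more than one point cannot be anything other than a copy of $[0,1]$.
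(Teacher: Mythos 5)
Your proposal is correct and follows essentially the same route as the paper, which reduces ``$X$ is an intom'' to ``$X$ is connected ($\Pi^0_1$) and non-singleton ($\Sigma^0_1$)'' under the structural assumptions on $D\cong C(B)$; you simply spell out the justification of that equivalence and of the two complexity bounds in more detail than the paper does.
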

\begin{proof}
Under the topological assumptions about $D$, we need to check that $X$ is connected ($\Pi^0_1$) and non-singleton ($\Sigma^0_1$). 
\end{proof}

\begin{lemma}\label{lem:intomle} ``$X\cap D'$ is an intom'' is $\Sigma^0_2$.
\end{lemma}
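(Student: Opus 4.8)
The plan is to reduce the statement to the two preceding lemmas by \emph{guessing the intom}. Concretely, I claim the equivalence
\[
X \cap D' \text{ is an intom} \iff \exists Y \,\big[\, Y \subseteq X \ \wedge\ Y \text{ is an intom} \ \wedge\ X \sim Y \,\big],
\]
where $Y$ ranges over the effective list of clopen sets supplied by Lemma~\ref{lem:Dlist}. The key topological observation is that an intom contains no isolated points of $D$: since $Y \cong [0,1]$ is clopen and perfect, every point of $Y$ is a limit of points of $Y \subseteq D$, whence $Y \subseteq D'$ and so $Y \cap D' = Y$. Thus whenever the right-hand side holds we get $X \cap D' = Y \cap D' = Y$, which is an intom.

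For the converse, suppose $X \cap D' = J$ is an intom. By Property~\ref{pr:in}, no intom is a limit of points lying outside it, so $J$ is isolated from the remainder of $D$; consequently $J$ is clopen and therefore occurs in the list of Lemma~\ref{lem:Dlist}. Taking $Y = J$ gives $Y \subseteq X$, $Y$ is an intom, and $X \sim Y$ because $X \cap D' = J = Y = Y \cap D'$. Here I also use that $X$, being clopen, meets each connected component of $D$ in all or nothing (a clopen set restricted to a connected set is relatively clopen); hence $X$ contains each intom wholly or not at all, and $X \cap D'$ equals a single intom exactly when $X$ contains precisely one intom and no point of any Cantor component.

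Granting the equivalence, the complexity estimate is immediate. The matrix is a finite conjunction of: the decidable condition $Y \subseteq X$ (decidable since containment reduces to disjointness of $Y$ from the complement of $X$, and intersections of clopen sets from the list are decidable by Lemma~\ref{lem:Dlist}); the condition ``$Y$ is an intom'', which is $\Sigma^0_1 \,\&\, \Pi^0_1$ (hence $\Delta^0_2$) by the preceding lemma; and ``$X \sim Y$'', which is $\Sigma^0_2$ by the lemma before that. A conjunction of $\Sigma^0_2$ predicates is $\Sigma^0_2$, and prefixing an existential number quantifier over the computable list of clopen sets keeps the predicate $\Sigma^0_2$. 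The only steps that require genuine care are the two topological ones feeding the equivalence: that an intom witness $Y$ necessarily lies inside $D'$ (so that $X \sim Y$ transfers intom-hood to $X \cap D'$), and, in the converse direction, that Property~\ref{pr:in} forces $J$ to be clopen so that it is actually available as a witness $Y$ in the list. Everything else is routine bookkeeping on the arithmetical hierarchy.
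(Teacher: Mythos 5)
Your proof is correct and is essentially the paper's argument: the paper likewise reduces the statement to guessing a witness and checking that $X$ decomposes into finitely many isolated points plus an intom, using the $\Sigma^0_1\,\&\,\Pi^0_1$ bound from the preceding intom lemma. Routing the ``finitely many isolated points'' part through the $\Sigma^0_2$ relation $X \sim Y$ rather than quantifying over the decomposition directly is only a cosmetic repackaging, and your two topological checks (an intom witness lies in $D'$; Property~\ref{pr:in} makes $X\cap D'$ clopen in $D$ and hence available in the list of Lemma~\ref{lem:Dlist}) are exactly the facts the paper's one-line proof implicitly relies on.
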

\begin{proof}
We need to check that $X$ splits into finitely many atoms (isolated points) and an intom. Since being an isolated point is $\Pi^0_1$,  this is a search for a $\Sigma^0_1 \& \Pi^0_1 $-property, by the previous lemma.
\end{proof}

In Boolean algebras, the unary predicate $inf$ indicates that the element generates an infinite ideal.
In the context of $D$, we say that a clopen set satisfies $inf$ if it has infnitely many non-trivial, non-identical clopen splits.

\begin{lemma}  $X'$ satisfies $inf$ in $D'$ iff $X$ satisfies $inf$ in $D$. Thus, the predicate  $inf$ is $\Pi^0_2$ in $D'$ (represented as $D /{\sim}$).
\end{lemma}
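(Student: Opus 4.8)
The plan is to separate the statement into its two assertions: the topological equivalence that $X'$ satisfies $inf$ in $D'$ if and only if $X$ satisfies $inf$ in $D$, and then the complexity bound, which I will obtain by computing the complexity of $inf$ directly on $D$ and transferring it along this equivalence. Throughout I use that an element of $D' = D/{\sim}$ is named by an index for a clopen set $X \subseteq D$, with the predicate then applied to its trace $X' = X \cap D'$.

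For the equivalence, I first record that the trace map $W \mapsto W \cap D'$ from clopen sets of $D$ to clopen sets of $D'$ is surjective, so that every clopen subset of $D'$ has the form $W \cap D'$ for some clopen $W \subseteq D$; this is precisely the content of representing $D'$ as $D/{\sim}$. Granting this, the implication from $inf(X')$ to $inf(X)$ is immediate: given infinitely many pairwise distinct clopen $V \subseteq X'$, I lift each to a clopen $W \subseteq D$ with $W \cap D' = V$ and pass to $W \cap X$; the resulting clopen subsets of $X$ have pairwise distinct traces on $D'$, hence are pairwise distinct, so $inf(X)$ holds. For the converse I argue contrapositively. If $X'$ has only finitely many clopen subsets, then, since any Cantor piece (Property~\ref{pr:at}) carries infinitely many clopen subsets while $D'$ is perfect (Lemma~\ref{lem:1}), $X'$ contains no Cantor piece and must be a finite union of intoms, each intom being connected and hence a clopen atom of $D'$. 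Lifting back to $D$: the set $X$ contains no clopen Cantor piece (such a piece is perfect and would survive into $X'$), and $X = X' \cup S$, where $S$ is the set of isolated points of $X$ lying off $D'$. By Property~\ref{pr:in} these isolated points do not accumulate at the intoms comprising $X'$, and $X'$ contains nothing else for them to accumulate to; as $X$ is compact, $S$ is finite. Hence $X$ is a finite union of intoms and isolated points, so it has only finitely many clopen subsets, i.e.\ $\neg inf(X)$.

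For the complexity, I observe that $inf(X)$ is equivalent to the assertion that $X$ has infinitely many distinct clopen subsets (each nontrivial split corresponding to a proper clopen subset). Using Lemma~\ref{lem:Dlist} I may enumerate all clopen sets by index and decide both inclusion $W \subseteq X$ (via $W \cap (D \setminus X) = \emptyset$) and equality of clopen sets; thus, given indices $W_1, \dots, W_n$, the predicate ``the $W_i$ are pairwise distinct clopen subsets of $X$'' is computable. Consequently ``$X$ has at least $n$ distinct clopen subsets'' is $\Sigma^0_1$ (an existential search over $n$-tuples of indices with a computable matrix), and $inf(X)$, being $\forall n$ of this, is $\Pi^0_2$. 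Transferring along the equivalence already proved, the predicate $X \mapsto inf(X')$ on $D' = D/{\sim}$ is $\Pi^0_2$ as well.

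The step I expect to be the main obstacle is the converse direction of the equivalence, and specifically the control of isolated points: the argument must turn finiteness of the clopen algebra of $X'$ into finiteness of the set $S$ of isolated points of $X$, which is exactly where perfectness of $D'$ (Lemma~\ref{lem:1}) and the non-accumulation of isolated points at intoms (Property~\ref{pr:in}) are essential. The only other delicate point is surjectivity of the trace map $A(D) \to A(D')$, which I will justify by separating the disjoint compacta $V$ and $D' \setminus V$ using a union of sufficiently fine clopen components from the partition list of Lemma~\ref{lem:Dlist}, absorbing the irrelevant isolated points into either side.
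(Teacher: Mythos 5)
Your proof is correct and follows essentially the same route as the paper's: the key step in both is that, since $D'$ is perfect (Lemma~\ref{lem:1}) and isolated points cannot accumulate at intoms (Property~\ref{pr:in}), a clopen $X$ with $\neg inf(X')$ must reduce to finitely many intoms plus finitely many isolated points, hence $\neg inf(X)$; the $\Pi^0_2$ bound is the same $\forall\exists$ count of clopen splits. You merely spell out the easy lifting direction and the finiteness of the isolated-point set more explicitly than the paper does.
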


\begin{proof} If $X$ contains clopen subcomponent isomorphic to $2^{\omega}$, then $inf(X)$ clearly holds.
Otherwise, $X$ should be infinite. Since no clopen subcomponent is isomorphic to $\operatorname{IntAlg}(\omega)$, $X'$ cannot possibly contain only finitely many intoms.
Indeed, if this were the case, it would have to be equal to the disjoint union of these intoms. Before taking the derivative, this would have to be just
a finite union of isolated points and intoms; such clopen component do not satisfy $inf$.
Since $inf$ is clearly $\Pi^0_2$ (it describes the infinite process of splitting further and further), the lemma follows.
\end{proof}

\begin{lemma}  ``$X\cap D'$ is atomless'' is a $\Pi^0_2$.
\end{lemma}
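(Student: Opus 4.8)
The plan is to reduce atomlessness to a statement about intoms and then invoke Lemma~\ref{lem:intomle}. The key structural observation is that the atoms of the clopen algebra of $D'$ are \emph{exactly} the intoms. Indeed, by Lemma~\ref{lem:1} the space $D'$ has no isolated points, so any minimal nonzero clopen set must be connected; and since the only connected clopen components of $D'$ are intoms (a single point of a Cantor-like, perfect part is never clopen), a clopen set is an atom precisely when it is an intom. Consequently $X \cap D'$ is atomless if and only if it contains no intom as a clopen subcomponent.

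Next I would rephrase ``contains no intom'' as a universal statement over the effective list of clopen sets. By Property~\ref{pr:in}, every intom is clopen in $D$ (it is not a limit of points outside itself), so an intom $a \subseteq X \cap D'$ appears in the list as a clopen set $Y \subseteq X$ with $Y \cap D' = a$. Hence $X \cap D'$ is atomless iff for \emph{every} clopen $Y \subseteq X$ from the list, $Y \cap D'$ is not an intom. The restriction $Y \subseteq X$ is decidable by Lemma~\ref{lem:Dlist}, and by Lemma~\ref{lem:intomle} the predicate ``$Y \cap D'$ is an intom'' is $\Sigma^0_2$, so its negation is $\Pi^0_2$. A number-universal quantifier over a $\Pi^0_2$ matrix (with a decidable guard on $Y$) remains $\Pi^0_2$, which yields the bound.

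The main thing to get right is the equivalence ``atomless $\iff$ no intom below,'' and in particular that every atom of $X \cap D'$ is realised as $Y \cap D'$ for a clopen $Y$ of $D$ in the list --- this is exactly where Property~\ref{pr:in} and Lemma~\ref{lem:1} do the real work, since they guarantee both that $D'$ is perfect and that intoms sit isolatedly (hence clopenly) inside it. I would also resist the temptation to characterise atomlessness directly via splitting (``every nonempty clopen piece of $X \cap D'$ splits into two nonempty clopen pieces''), since ``$Y \cap D' \neq \emptyset$'' is itself $\Pi^0_2$ and the resulting $\exists(\Pi^0_2 \wedge \Pi^0_2)$ would only give a $\Sigma^0_3$ estimate; routing everything through the already-established $\Sigma^0_2$ complexity of being an intom is what keeps the bound at $\Pi^0_2$.
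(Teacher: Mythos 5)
Your main reduction --- quantify universally over the effective list of clopen $Y \subseteq X$ and negate the $\Sigma^0_2$ predicate of Lemma~\ref{lem:intomle} --- is exactly the first half of the paper's proof, and your justification that atoms of the clopen algebra of $D'$ are precisely the intoms (via Lemma~\ref{lem:1} and Property~\ref{pr:in}) is sound. But the stated equivalence ``$X \cap D'$ is atomless iff it contains no intom as a clopen subcomponent'' is not quite right, and this is where your proof has a gap: if $X \cap D' = \emptyset$ (e.g., $X$ is a finite set of isolated points of $D$), then $X$ contains no intom, yet $X \cap D'$ is the zero element of the clopen algebra and should not be declared atomless. The paper handles this by conjoining a second condition: it writes atomlessness as (no clopen sub-component is an intom) $\wedge$ ($inf$ holds), where the second conjunct is $\Pi^0_2$ by the preceding lemma and, in the presence of the first conjunct, is equivalent to non-triviality of $X \cap D'$.

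The omission is easily repaired without damaging the complexity bound: add the single conjunct ``$X \cap D' \neq \emptyset$'' (equivalently, ``$X$ is not a finite union of isolated points,'' which is $\Pi^0_2$ since being an isolated point is $\Pi^0_1$), or use the paper's $inf$ predicate. A conjunction of two $\Pi^0_2$ statements is still $\Pi^0_2$, so your final estimate survives. It is worth noting that you clearly saw the relevant subtlety --- your closing paragraph correctly identifies that ``$Y \cap D' \neq \emptyset$'' is $\Pi^0_2$ and that it must not appear as the hypothesis of a universally quantified implication --- but you then dropped the non-triviality requirement from the top-level characterisation rather than placing it as an outer conjunct, which is what the paper does.
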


\begin{proof}
This is to say that there is no clopen sub-component that is an intom (this is the negation of $\Sigma^0_2$ by Lemma~\ref{lem:intomle}) and that $inf$ holds (which is $\Pi^0_2$ by the previous lemma).
\end{proof}

\subsection{The last step of the proof}\label{subsect:last-step}

Fix $C(B) \subseteq [0,1]$ produced by the construction; it is c.e.~closed.
Aiming for a contradiction, suppose $D \cong C(B)$ is a computably compact homeomorphic copy of $C(B)$.

Note that the sequence of lemmas from the previous subsection guarantee that the following properties of clopen components can be decided using $\mathbf{0}''$:
\begin{enumerate}
\item $\sim$ (the CB-equivalence);
\item  being an intom in $C(B)' = C(B)/ {\sim}$;
\item being atomless in $C(B)'$;
\item $inf$ in $C(B)'$.
\end{enumerate}

In the notation of Proposition~\ref{pr:du1}, in the induced algebra of clopen sets $A_B = A(C(B))$,  these properties correspond to:

\begin{enumerate}
\item the congruence $\sim_a$ modulo atoms for which $\mathtt{in}$ fails;
\item  being an atom in $r(A_B) = A_B / {\sim_a}$ (ignoring predicate $\mathtt{in}$);
\item being atomless in $r(A_B)$;
\item $inf$ in $r(A_B)$.
\end{enumerate}
All these properties and relations are $\mathbf{0}''$-effective. Now, $A_B$ can be viewed as the algebra generated by the indices of clopen sets in $C(B)$.
Since $r(A_B) \cong B$ by Proposition~\ref{pr:du1}, we obtain a $\mathbf{0}''$-presentation of $(B, atom, atomless, inf)$.
By a well-known result of Thurber~\cite{ThurThesis} (in the form that it appears in \cite{knight2000}) we conclude that $B$ must have a computable presentation.
This, however, contradicts the choice of $B$, which was a c.e.~presented algebra with no computable presentation.

\

The proof of Theorem~\ref{thm:1} is complete.

\subsection{Proof of Corollary~\ref{cor:Ba}}
Our proof is similar to (and is indeed simpler than)  the proof of \cite[Theorem 1.3]{CounterExa}.

 Let \( C \) be the c.e.~closed subset of \([0,1]\) constructed above.  
We need to define a computable Banach presentation (Definition~\ref{def:basp}) of \(\mathcal{C}[C; \mathbb{R}]\), which is  the space of continuous functions \( f\colon C \to \mathbb{R} \) under pointwise addition (and the induced scalar multiplication with real coefficients) and the supremum metric
$$d(f,g) = \sup_{x \in C} |f(x)-g(x)|$$
induced by the supremum norm $\| f \| = \sup_{x \in C} f(x)$.

This is done as follows.
The computable dense subset of the domain of \(\mathcal{C}[C; \mathbb{R}]\) is given by the uniformly effective collection of piecewise linear functions with finitely many breaking points with rational coordinates, where the `breaks' occur over the special points of \( C \).  
It is clear that their maxima and minima are always achieved at one of the breaking points and are thus (uniformly) computable.  

In particular, the suprema for these functions (when viewed as functions \([0,1] \to \mathbb{R}\)) are achieved at inputs \emph{that are guaranteed to be in \( C \)}.
The functions in our dense sequence are the restrictions to \(C\) of
piecewise linear functions. It follows that the supremum over \(C\) coincides with
the supremum over \([0,1]\), and is therefore uniformly computable, making the supremum metric computable in \(\mathcal{C}[C; \mathbb{R}]\). (This is the main subtlety in the proof; see Remark~\ref{rem:rem} for a further explanation.)

Moreover, \( + \) on these linear combinations is clearly uniformly effective as well.  
It remains to observe that such functions separate points in \( C \); indeed, for any distinct \( x, y \in C \), we can always find a tooth function that is zero at \( x \) and non-zero at \( y \neq x \).  
It is well-known that this implies that such functions are dense in \(\mathcal{C}[C; \mathbb{R}]\) (by the Stone--Weierstrass Theorem). This finishes the proof of Corollary~\ref{cor:Ba}.

\begin{remark} \label{rem:rem} 
The reader may be tempted to generalise the construction above to arbitrary compact \( K \) that are computable Polish, as follows.  
For any basic open ball \( B = B(c; r) \) in \( K \), consider the generalised tooth function \[ f_B(x) = \dfrac{1}{r} \sup\{0, r - d(x,c)\}, \] and then take finite $\mathbb{Q}$-linear combinations of such functions.  
However, it is not particularly clear how the suprema of such functions should be calculated exactly.
Indeed, even if we knew that the supremum of a linear combination is achieved at a special point in \(K\), it could potentially be achieved at some point that has not yet been put into \(K\), leaving us guessing. For example, we could have \(B \subset B'\), so that the supremum of 
\(m_1 f_{B} + m_2 f_{B'}\) can potentially be achieved at the formal boundary \(\delta B' = \{y : d(c', y) = r'\}\) of the smaller ball \(B' = B(c', r')\); however, \(\delta B'\) (together with its open neighbourhood) may or may not be empty, and this is undecidable for a non-computably compact space. (The simplest example is when \(c = c'\), \(r' = r/2\), \(m_1 = 1\), and \(m_2 = -1\); but the balls do not actually have to be equicentric for this situation to occur. We can put a point at the formal boundary of $B'$ arbitrarily late, and otherwise keep only $c$ inside $B$.)

The best we can claim is that the metric is left-c.e.~(lower semicomputable), i.e., it can be approximated from below.  
It is known, however, that there exist left-c.e.~presented Banach spaces that are not linearly isometric to any computable space (this follows from~\cite{McNicholl2020}), and thus our attempted argument seems to completely break apart.

In our proof for \(C \subseteq [0,1]\), we resolve this by (essentially) specifically ensuring that the formal boundaries of the balls (ends of intervals) that we use are never empty, which is straightforward for subsets of \([0,1]\). In~\cite{CounterExa}, more work is needed to avoid a similar issue for a certain \(C \subseteq \mathbb{R}^2\). Perhaps every compact computable Polish \(K\) can be effectively embedded into the Hilbert cube in some especially nice way, making the natural presentation of the space of continuous functions on the image computable Polish. Indeed, we do not know whether \(\mathcal{C}[K, \mathbb{R}]\) has a computable Banach presentation for \emph{any} computable Polish (locally) compact \(K\). We leave this as an open problem.

Some further remarks can be found in \cite{CounterExa}.
\end{remark}

\subsection{The index set result: Proof of Theorem~\ref{thm:2}}
We prove that the index set of c.e.~subspaces of $[0,1]$ that admit 
a computably compact presentation is not arithmetical, as witnessed by subsets of $[0,1]$.

 It is not difficult to prove the following:

\begin{lemma}
The index set of computably presentable Boolean algebras among the c.e.~presented ones is not arithmetical.
\end{lemma}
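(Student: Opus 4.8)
The plan is to show that a fixed non-arithmetical set many-one reduces to the index set in question. Write $I$ for the set of indices $e$ such that the $e$-th c.e.\ presentation $\mathcal{B}_e$ of a Boolean algebra admits a computable presentation. Recall that the ``universal'' set $U=\{\langle n,x\rangle : x\in\emptyset^{(n)}\}$ is not arithmetical: it is not $\Sigma^0_m$ for any fixed $m$, since its section at $n=m+1$ is already $\Sigma^0_{m+1}$-complete. It therefore suffices to produce, uniformly in $n$, a computable reduction of a $\Sigma^0_n$-complete set $S_n$ to $I$. Indeed, if $I$ were $\Sigma^0_m$, then pulling $I$ back along the reduction for $n=m+1$ would place the $\Sigma^0_{m+1}$-complete set $S_{m+1}$ inside $\Sigma^0_m$, contradicting the hierarchy theorem; as $m$ is arbitrary, $I$ is not arithmetical.

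The engine of each reduction is the relativised, uniform form of Feiner's construction recalled above: one builds $B=\sum_i B_i$ where each summand $B_i$ is either $\operatorname{IntAlg}(\mathbb{Z}^{k}\mathbb{Q}+1+\mathbb{Q})$ or $\operatorname{IntAlg}(\mathbb{Z}^{k+1}+1+\mathbb{Q})$, and the choice between the two types in block $i$ encodes one bit. The role of the powers of $\mathbb{Z}$ is that the arithmetic complexity of recovering these bits from \emph{any} presentation of $B$ grows with $k$: each additional factor of $\mathbb{Z}$ corresponds to one further Turing jump, which is the ``$\times\mathbb{Z}$ equals one jump'' phenomenon underlying the Feiner hierarchy. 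Relativising the diagonalisation to $\emptyset^{(j)}$ and choosing $k$ proportional to $n$, I would code a given $\Sigma^0_n$ predicate ``$x\in S_n$'' into the block-choices so as to obtain, computably and uniformly in $(n,x)$, a c.e.\ presentation $\mathcal{B}_{h(n,x)}$ for which $\mathcal{B}_{h(n,x)}$ has a computable copy if and only if $x\in S_n$.

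Both directions of this equivalence come from tools already cited. For ``$x\in S_n \Rightarrow$ computable copy,'' once the coded family of invariants drops to the appropriate lower level the resulting algebra is (relativised) $\mathrm{low}_2$, and Thurber's theorem~\cite{ThurThesis} in the form of~\cite{knight2000} supplies a computable presentation. For the converse, ``$x\notin S_n \Rightarrow$ no computable copy,'' the genuinely level-$n$ complexity of the invariants triggers Feiner's diagonalisation~\cite{feiner1970}, which defeats every candidate computable presentation. Combining these over all $n$ yields the uniform family of reductions $h(n,\cdot)$ and hence the claim.

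The main obstacle is the exact calibration of the two thresholds so that they meet precisely at the $\Sigma^0_n$-complete boundary, uniformly in $n$. Feiner's argument naturally produces a gap of the shape ``has a $\Delta^0_{j+1}$ copy but no $\Delta^0_j$ copy,'' while Thurber's theorem gives only a one-sided sufficient condition; turning this gap into a clean ``if and only if'' that matches a complete set at each level---that is, checking that the ``$\times\mathbb{Z}$ equals one jump'' bookkeeping aligns the positive and negative directions with no slack, and that the whole assignment $(n,x)\mapsto h(n,x)$ is genuinely computable rather than merely $\emptyset^{(n)}$-computable---is where the real work lies. This is in essence a jump-inversion analysis for interval Boolean algebras, and it is the only nontrivial point in the proof.
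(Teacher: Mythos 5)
Your overall skeleton is right and matches the paper's: produce, for each arithmetical level, a uniform many-one reduction of a (co-)complete set at that level into the index set via Feiner-style block coding, then invoke the hierarchy theorem. But the core of the argument --- the equivalence ``$\mathcal{B}_{h(n,x)}$ has a computable copy iff $x\in S_n$'' --- is exactly the part you leave unproven, and you explicitly flag its calibration as ``where the real work lies.'' That is a genuine gap, and the route you sketch for closing it is both harder than necessary and partly misdirected. First, coding a single bit ``$x\in S_n$'' into block choices cannot by itself control computable presentability: an algebra whose isomorphism type is determined by one bit (plus computable data) is always computably presentable. What must be gated is the \emph{entire infinite family} of Feiner invariants simultaneously. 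Second, the positive direction does not come from any relativised $\mathrm{low}_2$/Thurber argument or from a level-by-level jump-inversion analysis; no such calibration is needed at all.

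The paper's proof dissolves the obstacle as follows. Fix Feiner's predicates $R_n$ (uniformly $\Sigma^0_{2n+4}$, not uniformly $\Sigma^0_{2n+3}$) and, given an arithmetical $P\in\Sigma^0_{2m+4}$, replace $R_n(n)$ by $P(i)\,\&\,R_n(n)$ for $n\ge m$, leaving the finitely many earlier blocks alone. The conjunction is still $\Sigma^0_{2n+4}$, so the family $(B_i)_{i\in\omega}$ is uniformly c.e.~presented. If $P(i)$ holds, $B_i$ \emph{is} Feiner's algebra, so ``no computable copy'' is Feiner's theorem verbatim --- no new lower-bound argument. If $P(i)$ fails, the coded set becomes a fixed finite set, the isomorphism type collapses to an obviously computable one, and ``has a computable copy'' is trivial --- no Thurber, no lowness. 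Thus $\neg P \le_m I$ for every arithmetical $P$, which gives non-arithmeticity. Your proposal would become correct if you replaced the per-level threshold calibration with this gating of the already-fixed Feiner bits by the target predicate; as written, the decisive step is missing.
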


\begin{proof}
 In the proof of Feiner's Theorem, one fixes an effective sequence of uniformly $\Sigma^0_{2n+4}$-predicates
$(R_n)_{n \in\omega}$ that are not uniformly $\Sigma^0_{2n+3}$. Then, the isomorphism type of the coding components,  $\operatorname{IntAlg}(\mathbb{Z}^n \mathbb{Q} +1 +\mathbb{Q})$ vs.\ $\operatorname{IntAlg}(\mathbb{Z}^{n+1} +1 +\mathbb{Q})$,
is decided based on whether $R_n(n)$ holds. The predicates can be easily chosen so that this decision procedure is not uniformly $\Sigma^0_{2n+3}$ in $n$, using the usual diagonalisation argument.
In other words,  we code into $B$ the set $S = \{n: R_n(n)\} \in \Sigma^0_{\langle 2n+4 \rangle} \setminus \Sigma^0_{\langle 2n+3 \rangle}$ in the Feiner's hierarchy of sets below $\emptyset^{(\omega)}$.
It is also shown that the Boolean algebra $B(S)$ which encodes $S \subseteq \omega$ is computably presentable iff $S \in \Sigma^0_{\langle 2n+3 \rangle}$; and it is (uniformly in the description of $S$) c.e.-presented if $S \in \Sigma^0_{\langle 2n+4 \rangle}$. A detailed  self-contained proof is given in \cite{TheBook}. 

Let $P$ be any arithmetical predicate; say, $\Sigma^0_{2m+4}$.
We define a sequence of c.e.~presented Boolean algebras $(B_i)_{i \in \omega}$ as follows. 
We modify this choice of the predicates $R_n$ by simply replacing $R_n(n)$ with $P(i) \& R_n(n)$; the predicates  $R_n$ with  $n< m$ are left unchanged.

If $P(i)$ holds, then the new predicates are identical to the previous ones, and the construction produces a c.e.~presented $B_i$ without a computable copy.
Otherwise, if $P(i)$ fails, then for all $n \geq m$ we always produce a copy of $\operatorname{IntAlg}(\mathbb{Z}^{n+1} +1 +\mathbb{Q})$. The resulting Boolean algebra will clearly be computably presentable, even
though the procedure actually builds a c.e.~copy of the algebra.

Since $m$ was arbitrary, the lemma follows.
\end{proof}

Now, observe that the proof of Theorem~\ref{thm:1} gave a uniform procedure that turns $B_i$ (in the notation of the proof above) into $C(B_i)$, a c.e.~closed subset of $[0,1]$.

To establish that the index set of spaces having a computably compact presentation is not arithmetical, it remains to prove:

\begin{lemma}\label{lem:du1}
Assume that $B_i$ has almost all components of the form $\operatorname{IntAlg}(\mathbb{Z}^{n+1} +1 +\mathbb{Q})$. Then $C(B_i)$ has a computable presentation.
\end{lemma}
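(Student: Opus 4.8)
The plan is to show that, under the stated hypothesis, the entire construction of $C(B_i)$ can be carried out effectively, producing a \emph{computably compact} presentation; this is what the reduction in Theorem~\ref{thm:2} actually requires (and it is stronger than, hence yields, the computable presentation asserted). The guiding idea is that all of the non-effectivity in the general construction is confined to Feiner's diagonalisation, and the hypothesis switches that diagonalisation off.

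First I would isolate the source of non-effectivity. The tree $T$ was only $\Pi^0_1$ with $\Pi^0_2$ terminal nodes precisely because Feiner's procedure forces an unbounded guessing between the two possible isomorphism types $\operatorname{IntAlg}(\mathbb{Z}^n\mathbb{Q}+1+\mathbb{Q})$ and $\operatorname{IntAlg}(\mathbb{Z}^{n+1}+1+\mathbb{Q})$ of the coding components; see the discussion around (\ref{eq:1}) and (\ref{eq:2}). Under the hypothesis, all but finitely many coding components equal the fixed order $\operatorname{IntAlg}(\mathbb{Z}^{n+1}+1+\mathbb{Q})$, and the finitely many exceptions are known outright, so no guessing remains. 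Consequently $B_i$ is (uniformly) computable, and moreover the coding orders involved are explicit computable linear orders whose interval algebras have a \emph{decidable} atom relation and a recognisable atomless part. I would use this to replace the $\Pi^0_1$-tree $T$ by a genuinely computable binary tree $\Gamma\cong T$ on which the terminal-node (atom) predicate is decidable and the complete-binary (atomless) subtrees are recognisable.

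With such a computable $\Gamma$ in hand, I would re-run the construction of $C(T)$, driving it by the constant approximation $\Gamma_s=\Gamma$. The key simplification is that no node ever leaves and re-enters the tree, so the densification step (\ref{eq:5}) is never triggered by a \emph{change} of successors: each non-terminal $\sigma$ receives exactly its two child intervals plus a single one-off finite densification of $I_\sigma-(I_{\sigma_1}\cup I_{\sigma_2})$, each terminal $\sigma$ is densified into a genuine intom $\cong[0,1]$ on a fixed computable schedule, and each complete-binary subtree is realised as the standard (computably compact) middle-third Cantor set. All special points placed are rational with uniformly computable indices, so $C(B_i)\subseteq[0,1]$ is c.e.~closed exactly as before, but now with a \emph{computable} bookkeeping of which intervals $I_\sigma$ are active and of their roles.

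Finally I would verify computable compactness by exhibiting uniform finite $2^{-n}$-covers (Definition~\ref{def:1}). Because $\Gamma$ is binary and the intervals $(I_\sigma)$ shrink geometrically with depth, for each $n$ there are only finitely many $\sigma\in\Gamma$ with $\operatorname{diam}(I_\sigma)\ge 2^{-n}$, and this finite set is computable from $\Gamma$; one then covers each such $I_\sigma$, together with the computably compact intom or Cantor piece it carries, by its own uniform finite $2^{-n}$-cover and swallows the geometrically smaller tails into single $2^{-n}$-balls, yielding a finite basic $2^{-n}$-cover computable uniformly in $n$. Equivalently, one checks that $C(B_i)$ is now also $\Pi^0_1$ (its complement in $[0,1]$ is c.e., since with $\Gamma$ computable we can enumerate the rational open intervals lying in the finitely-many-isolated-point gaps of each non-terminal $I_\sigma$ and outside all intoms and Cantor pieces), so that $C(B_i)$ is a computable closed subset of $[0,1]$ and hence computably compact. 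I expect the main obstacle to be exactly this verification: confirming that the densification leaves those gaps open and that every accumulation of $C(B_i)$ is confined to the infinite paths of $\Gamma$ and to the intoms and Cantor pieces (so that at each scale the cover is genuinely finite and computable). Once this is established—and it uses only Property~\ref{pr:in}, the binariness of $\Gamma$, and the fixed geometric placement of the $I_\sigma$—computable compactness, and hence the desired presentation, follows by routine bookkeeping.
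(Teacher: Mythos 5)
There is a genuine gap, and it sits exactly where you wave the issue away. Re-running the construction on a computable tree $\Gamma\cong T$ with the constant approximation $\Gamma_s=\Gamma$ does produce a computably compact closed $D\subseteq[0,1]$; what you never show is that $D\cong C(B_i)$. The homeomorphism type of the construction's output is not a priori a function of the isomorphism type of $B_i$: it depends on the dynamics of the $\Pi^0_1$-approximation, because the ``junk'' isolated points deposited by the point-placement rule and by the densification step (\ref{eq:5}) --- how many of them end up in $I_\sigma-(I_{\sigma_1}\cup I_{\sigma_2})$ for each non-terminal $\sigma$ --- are artefacts of how often successors were replaced. Your static re-run produces a different (in your own words, ``one-off'') junk pattern. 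What Lemma~\ref{lem:2} gives you is only $R(D)\cong\widehat{B_i}\cong R(C(B_i))$, i.e.\ agreement after taking the Cantor--Bendixson derivative and collapsing intoms; that is far from a homeomorphism of the spaces themselves. Locally the discrepancy is real: a clopen piece consisting of two intoms plus $k$ junk points is not homeomorphic to one with two intoms plus one junk point, so one must argue that such finite mismatches can be absorbed globally by a suitable re-partitioning.

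That absorption argument is precisely the content of the paper's proof, and it is the part your proposal omits. The paper observes that every split of $T$ is accompanied by at least one junk atom, so that below any element satisfying $inf$ there are infinitely many intoms \emph{and} infinitely many junk atoms; this yields condition ($\dagger$), and the Remmel--Vaught-type Proposition~\ref{prop:Cenzer-Remmel} then shows that the algebra of clopen sets with the distinguished predicate $\mathtt{in}$ is determined by the isomorphism type of $B_i$ alone. Only then does it follow that any two outputs of the construction --- in particular an effective one built from a computable copy and the original built from the c.e.\ presentation --- are homeomorphic. Your other points (the hypothesis switches off Feiner's diagonalisation, so $B_i$ has a computable copy, and the static construction is computably compact) are correct and consistent with the paper's intent, but without the presentation-independence argument the proof does not go through: as it stands you have only produced \emph{some} computably compact space whose reduct is $\widehat{B_i}$, not a computable presentation of $C(B_i)$.
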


\begin{proof} While $C(B_i)'$ depends only on the isomorphism type of $B_i$,
it appears that $C(B_i)$ depends on the exact presentation of $B_i$. However, this is not the case.

The proof is based on the observation that every split in the tree  $T = T_i$ of $B = B_i$ is always accompanied with at least one extra `junk' atom.

Recall that the predicate $inf$ indicates that an element $x$ generates an infinite ideal in $B$. Here we also use notations from Section~\ref{subsect:last-step}. The introduction of `junk' atoms ensures the following property: if $x\in B$ satisfies $inf(x)$, then $x$ must contain infinitely many atoms $y$ such that $\texttt{in}(y)$ holds (i.e., atoms viewed as intoms) \text{and} infinitely many atoms $z$ satisfying $\neg \texttt{in}(z)$. Indeed, consider the node $\sigma \in T$ that corresponds to $x\in B$. If $inf(x)$ is true, then (1)~we have infinitely many paths $p\in[T]$ that go through $\sigma$---these paths turn into intoms in the construction; and (2)~the subtree of $T$ which has root $\sigma$ is infinite, hence, this subtree has infinitely many splits and produces infinitely many `junk' atoms.

We deduce the following property: for any $x\in B$,
$$
	\mathrm{card}(\{ z\leq x : \texttt{in}(z)\}) < \omega\ \Leftrightarrow\ \mathrm{card}(\{ z\leq x : \neg \texttt{in}(z)\}) < \omega.
$$
Then Proposition~\ref{prop:Cenzer-Remmel} stated below implies that the isomorphism type of the structure $(B, \texttt{in})$ does not depend on the choice of a presentation of the structure $B$. 

In order to finish the proof, we observe that up to homeomorphism, the space $C(B)$ can be constructed as follows. Choose an appropriate oracle $X\subseteq \omega$ and construct a $\Pi^0_1(X)$-class $[T]$ such that $[T] \cong \widehat{B}$ and $T$ has additional $X$-computable labels: a node $\sigma \in T$ has label $\texttt{in}(\sigma)$ if and only if $\sigma$ isolates an atom $p\in [T]$ satisfying $\texttt{in}(p)$ in $(B,\texttt{in})$. Observe that the homeomorphism type of the labelled $[T]$ depends only on the isomorphism type of $(B,\texttt{in})$. Consider the standard homeomorphic embedding $\Psi$ from $[T]$ into $[0,1]$, and in the image $\Psi([T])$ replace every $\Psi(p)$ satisfying $\texttt{in}(p)$ with a (small enough) clopen component $[a_p,b_p]$. Then the modified $\Psi([T])$ is homeomorphic to $C(B)$.
\end{proof}

Let $B$ be a Boolean algebra. For a set $U \subseteq \operatorname{Atom}(B)$, we define the function $c(\cdot; U)$ as follows: for $x\in B$, put
\[
	c(x; U) = \mathrm{card}(\{ z \in U : z\leq x\}).
\]

\begin{proposition}\label{prop:Cenzer-Remmel}
	Let $B_0$ and $B_1$ be countable, atomic Boolean algebras, and let $U_i\subseteq \operatorname{Atom}(B_i)$ be infinite sets with the following property:
	\begin{itemize}
		\item[($\dagger$)] $\forall x [ c(x;U_i) \text{ is finite}\ \Leftrightarrow\ c(x; \mathrm{Atom}(B_i) - U_i) \text{ is finite}]$.	
	\end{itemize} 
	Consider the congruence $\sim_{i}$ on $B_i$ which is defined as follows: $x\sim_{i} y$ if and only if the symmetric difference $(x-y)\cup (y-x)$ is a finite sum of elements from $U_i$. If the quotient Boolean algebras $B_0/ {\sim_{0}}$ and $B_1/ {\sim_{1}}$ are isomorphic, then the structures $(B_0,U_0)$ and $(B_1,U_1)$ are also isomorphic.
\end{proposition}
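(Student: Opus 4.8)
The plan is to prove the proposition by a back-and-forth construction, using the given isomorphism $\psi \colon B_0/{\sim_0} \to B_1/{\sim_1}$ to steer the matching. First I would record that $\sim_i$ is exactly the Boolean congruence modulo the ideal $J_i$ generated by $U_i$ (finite sums of elements of $U_i$), so that $A_i := B_i/{\sim_i} = B_i/J_i$; write $\pi_i \colon B_i \to A_i$ for the quotient map. The aim is to build a Boolean isomorphism $\phi \colon B_0 \to B_1$ with $\phi(U_0) = U_1$; since both algebras are countable, it suffices to exhibit a back-and-forth system of finite partial isomorphisms, presented (as usual for Boolean algebras) via matched partitions of the units.

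The preliminary observations I would establish concern the interaction of $\pi_i$ with atoms. Using atomicity of $B_i$ together with $(\dagger)$, every non-$U_i$-atom is sent by $\pi_i$ to an atom of $A_i$, and for each $x$ the map $z \mapsto \pi_i(z)$ is a bijection between the non-$U_i$-atoms below $x$ and the atoms of $A_i$ below $\pi_i(x)$. (For surjectivity: an atom of $A_i$ below $\pi_i(x)$ lifts to some $c \le x$ with $c \notin J_i$; if $c$ had no non-$U_i$-atom below it, then $c(c; \operatorname{Atom}(B_i) - U_i) = 0$, whence $c(c;U_i) < \infty$ by $(\dagger)$, forcing $c \in J_i$, a contradiction.) In particular $A_i$ is atomic and $c(x; \operatorname{Atom}(B_i) - U_i)$ equals the number of atoms of $A_i$ below $\pi_i(x)$, a quantity preserved by $\psi$. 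Combined with $(\dagger)$, this shows that whenever $\psi(\pi_0(a)) = \pi_1(b)$ the quantities $c(a;\operatorname{Atom}(B_0)-U_0)$ and $c(b;\operatorname{Atom}(B_1)-U_1)$ agree, and hence the finiteness of $c(a;U_0)$ and of $c(b;U_1)$ coincide.

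With these facts in hand I would define the back-and-forth relation
$$R(a,b) \iff \psi(\pi_0(a)) = \pi_1(b) \ \text{ and } \ c(a;U_0) = c(b;U_1) \text{ in } \{0,1,2,\dots,\infty\},$$
and verify it is a back-and-forth system. The base case $R(1,1)$ holds since $U_i$ is infinite, and one checks directly that $R$ forces corresponding atoms to match in $U$-status (e.g.\ a $U_0$-atom $a$ has $\pi_0(a)=0$ and $c(a;U_0)=1$, forcing $b$ to be a single $U_1$-atom). For the forth step, given $R(a,b)$ and a splitting $a = a' \vee a''$ into disjoint pieces, I would first lift the partition $\pi_0(a)=\pi_0(a')\vee\pi_0(a'')$ through $\psi$ and the surjection $\pi_1$ to a disjoint splitting $b = b'_0 \vee b''_0$ with $\pi_1(b'_0)=\psi(\pi_0(a'))$ and $\pi_1(b''_0)=\psi(\pi_0(a''))$, and then repair the $U$-counts. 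Here is the decisive point: any admissible $b'$ must satisfy $\pi_1(b') = \pi_1(b'_0)$, i.e.\ $b' \mathbin{\triangle} b'_0 \in J_1$, so $b'$ may differ from $b'_0$ only by finitely many $U_1$-atoms. Thus only finite redistributions of $U_1$-atoms are ever required (so we never leave $B_1$ by forming an infinite join), and by the finiteness-matching above the target counts $c(a';U_0)$ and $c(a'';U_0)$ are always simultaneously reachable: when finite, move finitely many atoms between $b'_0$ and $b''_0$; when a target is infinite, the corresponding part already carries infinitely many $U_1$-atoms and survives any finite adjustment. The back step is symmetric via $\psi^{-1}$.

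Assembling the finite partial isomorphisms over enumerations of $B_0$ and $B_1$ yields a Boolean isomorphism $\phi$ that, by the atom-matching property of $R$, satisfies $\phi(U_0)=U_1$; hence $(B_0,U_0) \cong (B_1,U_1)$. I expect the main obstacle to be precisely the forth step's redistribution of atoms: a naive matching would seem to demand moving infinitely many $U_1$-atoms (and thus forming joins that need not exist in $B_1$), and the whole argument hinges on the observation that the quotient constraint $b'\mathbin{\triangle} b'_0 \in J_1$ confines all adjustments to the finite level, with $(\dagger)$ guaranteeing that the finite/infinite dichotomy of the $U$-counts is transported correctly by $\psi$.
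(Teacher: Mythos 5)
Your proposal is correct and follows essentially the same route as the paper: your relation $R(a,b)$ (agreement under the quotient isomorphism plus equality of the $U$-atom counts) is exactly the set $S$ the paper feeds into Vaught's back-and-forth criterion, your finite-redistribution repair of the counts is the paper's verification of Vaught's axiom (d), and the final atom-matching observation giving $\phi(U_0)=U_1$ is identical. The only cosmetic difference is that the paper packages the assembly step by citing Vaught's theorem (in the form of \cite{GonBoo}), whereas you carry out the back-and-forth explicitly and spell out the lifting of partitions through the quotient, which the paper leaves implicit.
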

\begin{proof}
	Vaught~\cite{Vau} obtained a sufficient condition for the existence of isomorphism between two countable Boolean algebras $B_0$ and $B_1$. Here we follow the exposition of the Vaught's result given in Section~1.5 of~\cite{GonBoo}. In order to prove that $B_0 \cong B_1$, it is sufficient to construct a set $S \subseteq B_0 \times B_1$ satisfying the following axioms:
	\begin{itemize}
		\item[(a)] $(0_{B_0}, 0_{B_1})$ and $(1_{B_0}, 1_{B_1})$ belong to $S$;
		
		\item[(b)] if $(0_{B_0},y) \in S$, then $y=0_{B_1}$;
		
		\item[(c)] if $(x,0_{B_1}) \in S$, then $x=0_{B_0}$;
		
		\item[(d)] if $(x,y)\in S$ and $a \leq x$, then there exists $b \leq y$ such that $(a,b)$ and $(x-a, y-b)$ belong to $S$;
		
		\item[(e)] if $(x,y) \in S$ and $b \leq y$, then there exists $a \leq x$ such that $(a,b), (x-a, y-b) \in S$.
	\end{itemize}
	
	For the sake of convenience, we use the following notations: for $x\in B_i$, put $c^{+}(x) = c(x; U_i)$ and $c^{-}(x) = c(x; \operatorname{Atom}(B_i) - U_i)$. 
	
	Let $F$ be an isomorphism from $B_0/{\sim_{0}}$ onto $B_1/{\sim_{1}}$. Notice the following: if $F(x/{\sim_0}) = y/{\sim_1}$, then we have $c^{-}(x) = c^{-}(y)$.
	
	We define the set
	\begin{equation*}
		S = \{ (x,y) : F(x/{\sim_0}) = y/{\sim_1} \text{ and } c^{+}(x) = c^{+}(y) \}.
	\end{equation*}
	It is clear that the set $S$ satisfies Axioms~(a)--(c). 
	
	We check Axiom~(d). Suppose that $(x,y) \in S$ and $a \leq x$. If we have $c^{-}(a) = c^{-}(x-a) = \omega$, then we can choose any $b\in B_1$ such that $b< y$ and $F(a/{\sim_0}) = b/{\sim_1}$. (Indeed, by Property~($\dagger$), we have $c^{+}(a) = c^{-}(a) = \omega$ and $c^{+}(b) = c^{-}(b) = c^{-}(a) = \omega$.)
	
	If $c^{-}(a) < \omega$, then we take $b\in B_1$ such that $b\leq y$, $F(a/{\sim_0}) = b/{\sim_1}$, and $c^{+}(b) = c^{+}(a)$. Such an element $b$ can be always chosen, since $c^{+}(y) = c^{+}(x)$. We observe that 
	\[
		c^{+}(y-b) = c^{+}(x-a) = \begin{cases}
			\omega, & \text{if } c^{+}(x) = \omega,\\
			c^{+}(x) - c^{+}(a), & \text{if } c^{+}(x) < \omega.
		\end{cases}
	\]
	A similar argument works for the case when $c^{-}(x-a) < \omega$. We conclude that $(a,b),(x-a,y-b) \in S$ and that $S$ satisfies Axiom~(d). Verification of Axiom~(e) goes mutatis mutandis.
	
	By the Vaught's theorem, we obtain that there exists an isomorphism $G$ between Boolean algebras $B_0$ and $B_1$. An analysis of the construction of the isomorphism $G$ (see Theorem~1.5.2 in~\cite{GonBoo}) shows that every atom $z\in \operatorname{Atom}(B_0)$ has the following property: $(z, G(z)) \in S$. The definition of $S$ implies that $c^{+}(z) = c^{+}(G(z))$, and this means that $z\in U_0$ if and only if $G(z) \in U_1$. Therefore, the map $G$ is an isomorphism from the structure $(B_0,U_0)$ onto $(B_1,U_1)$. 
\end{proof}

This concludes the proof of Theorem~\ref{thm:2}.

\section*{Funding}
This research is funded by the Science Committee of the Ministry of Science and Higher Education of the Republic of Kazakhstan (Grant No.~AP19676989).
The work of Bazhenov and Goncharov was also carried out within the framework of the state contract of the Sobolev Institute of Mathematics (project no. FWNF-2022-0011).
The work of Bazhenov was also supported by Nazarbayev University Faculty Development Competitive Research Grants 201223FD8823.

\section*{Acknowledgements}
The authors are grateful to Alibek Iskakov for assistance with the diagrams.

\bibliographystyle{alpha}
\bibliography{mainbib}

\end{document}